\newtheorem{theorem}{Theorem}
\newtheorem{lemma}{Lemma}
\newtheorem{remark}{Remark}
\newcommand{\cC}{\mathcal{C}}
\newcommand{\cD}{\mathcal{D}}
\newcommand{\cH}{\mathcal{H}}
\newcommand{\R}{\mathbb R}
\newcommand{\T}{\mathbb T}
\renewcommand{\S}{\mathbb S}
\newcommand{\e}{\varepsilon}
\newcommand{\s}{\psi}
\renewcommand{\d}{\delta}
\def\set4{\mathcal I}
\def\tup14{(1,2,3,4)}
\newcommand\vwidehat[1]{\arraycolsep=0pt\relax%
\begin{array}{c}
\stretchto{
  \scaleto{
    \scalerel*[\widthof{\ensuremath{#1}}]{\kern-.5pt\bigwedge\kern-.5pt}
    {\rule[-\textheight/2]{1ex}{\textheight}} 
  }{\textheight} %
}{0.5ex}\\           
#1\\                 
\rule{-1ex}{0ex}
\end{array}
}
\newtheorem*{comm*}{Comment}
\newtheorem{definition}{Definition}
\newtheorem*{lemma*}{Lemma}
\newtheorem{corollary}{Corollary}
\newcommand\widecheck[1]{%
\savestack{\tmpbox}{\stretchto{%
  \scaleto{%
    \scalerel*[\widthof{\ensuremath{#1}}]{\kern-.6pt\bigwedge\kern-.6pt}%
    {\rule[-\textheight/2]{1ex}{\textheight}}
  }{\textheight}%
}{0.5ex}}%
\stackon[1pt]{#1}{\scalebox{-1}{\tmpbox}}%
}
\newcommand{\supp}{\mathrm{supp}}
\newcommand{\de}{\delta}
\newcommand{\ga}{\gamma}
\newcommand{\Ga}{\Gamma}
\newcommand{\ZS}{\mathbb S}
\newcommand{\wh}{\widehat}
\newcommand{\si}{\sigma}
\newcommand{\Si}{\Sigma}
\newcommand{\Tau}{\mathcal T}
\newcommand{\en}{\epsilon_{\circ}}
\newcommand{\I}{\mathbb I}
\newcommand{\Gn}{\Ga_{\circ}}
\newcommand{\Id}{\boldsymbol 1}
\begin{document}

\keywords{decoupling inequalities, superlevel set}
\subjclass[2020]{42B15, 42B20}

\date{}

\title[restricted projections to lines in $\R^3$]{An exceptional set estimate for  restricted projections to lines in $\R^3$}
\author{Shengwen Gan}
\address{Department of Mathematics\\
Massachusetts Institute of Technology\\
Cambridge, MA 02142-4307, USA}
\email{shengwen@mit.edu}

\author{Larry Guth}
\address{Department of Mathematics\\
Massachusetts Institute of Technology\\
Cambridge, MA 02142-4307, USA}
\email{lguth@math.mit.edu}

\author{Dominique Maldague}
\address{Department of Mathematics\\
Massachusetts Institute of Technology\\
Cambridge, MA 02142-4307, USA}
\email{dmal@mit.edu}
\maketitle

\begin{abstract}
    Let $\ga:[0,1]\rightarrow \ZS^{2}$ be a non-degenerate curve in $\R^3$, that is to say, $\det\big(\ga(\theta),\ga'(\theta),\ga''(\theta)\big)\neq 0$. For each $\theta\in[0,1]$, let $l_\theta=\{t\ga(\theta):t\in\R\}$ and $\rho_\theta:\R^3\rightarrow l_\theta$ be the orthogonal projections. 
    We prove an exceptional set estimate. For any Borel set $A\subset\R^3$ and $0\le s\le 1$, define $E_s(A):=\{\theta\in[0,1]: \dim(\rho_\theta(A))<s\}$. We have $\dim(E_s(A))\le 1+\frac{s-\dim(A)}{2}$.  
\end{abstract}

\section{Introduction}

If $\ga:[0,1]\rightarrow\ZS^2$ is a curve that satisfies the non-degenerate condition $$\det\big(\ga(\theta),\ga'(\theta),\ga''(\theta)\big)\neq 0,$$ 
then 
we call $\ga$ a \textit{non-degenerate curve}.
A model example for the non-degenerate curve is
$\ga_\circ:\theta\mapsto (\frac{\cos\theta}{\sqrt{2}},\frac{\sin\theta}{\sqrt{2}},\frac{1}{\sqrt{2}})$ $(\theta\in[0,1])$. 

In this paper, we study the the projections in $\R^3$ whose directions are determined by $\ga$.
For each $\theta\in[0,1]$, let $V_\theta\subset \R^3$ be the $2$-dimensional subspace that is orthogonal to $\ga(\theta)$ and let $l_\theta\subset\R^3$ be the $1$-dimensional subspace spanned by $\ga(\theta)$.
We also define $\pi_\theta:\R^3\rightarrow V_\theta$ to be the orthogonal projection onto $V_\theta$, and define $\rho_\theta:\R^3\rightarrow l_\theta$ to be the orthogonal projection onto $l_\theta$. 
We use $\dim X$ to denote the Hausdorff dimension of set $X$. Let us state our main results.

\begin{theorem}\label{mainlineex}
Suppose $A\subset \R^3$ is a Borel set of Hausdorff dimension $\alpha$. For $0\le s\le 1$, define the exceptional set 
\begin{equation}\label{Es}
     E_s=\{\theta\in[0,1]: \dim(\rho_\theta(A))<s \}. 
\end{equation}
Then we have
\begin{equation}\label{exestimate}
    \dim (E_s)\le \max\{0,1+\frac{s-\alpha}{2}\}. 
\end{equation}
 
\end{theorem}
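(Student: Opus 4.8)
The plan is to argue by contradiction, converting the statement into a single $L^{2}$ estimate for $\wh\mu$ averaged over the cone generated by $\ga$, and proving that estimate by decoupling. Suppose $\dim E_{s}>u$ for some $u>\max\{0,\,1+\tfrac{s-\alpha}{2}\}$. Since $E_{s}$ is increasing in $s$, $E_{0}=\emptyset$, and $E_{1}=\bigcup_{s'<1}E_{s'}$, we may assume $0<s<1$. Choose $\alpha'<\alpha$ and $0<u'<u$ close enough to $\alpha$ and $u$ that $\alpha'+2u'-2>s$ (possible since $\alpha+2u-2>s$), and, by Frostman's lemma, probability measures $\mu$ on $A$ and $\nu$ on $E_{s}$ with $\mu(B(x,r))\lesssim r^{\alpha'}$ and $\nu(B(\theta,r))\lesssim r^{u'}$. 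For each $\theta\in E_{s}$ the analytic set $\rho_{\theta}(A)$ has Hausdorff dimension $<s$ and satisfies $(\rho_{\theta})_{*}\mu(\rho_{\theta}(A))=1$, so by the energy characterisation of dimension the Riesz $s$-energy $I_{s}\big((\rho_{\theta})_{*}\mu\big)$ equals $+\infty$; hence $\int_{0}^{1}I_{s}\big((\rho_{\theta})_{*}\mu\big)\,d\nu(\theta)=+\infty$. It is therefore enough to prove that this integral is \emph{finite}.

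Identifying $l_{\theta}$ with $\R$ via $r\mapsto r\ga(\theta)$, the one-dimensional Fourier transform of $(\rho_{\theta})_{*}\mu$ at $t$ is $\wh\mu(t\ga(\theta))$, and for $0<s<1$ the classical identity for the Riesz energy gives $I_{s}\big((\rho_{\theta})_{*}\mu\big)=c_{s}\int_{\R}|t|^{\,s-1}\,|\wh\mu(t\ga(\theta))|^{2}\,dt$. Integrating against $\nu$ and splitting dyadically, the range $|t|\le 1$ contributes $\lesssim\int_{|t|\le1}|t|^{s-1}\,dt<\infty$ because $s>0$, and the claim reduces to the \emph{key estimate}: for every $\eps>0$ and every $R\ge 1$,
\[
\int_{|t|\sim R}\int_{0}^{1}\big|\wh\mu(t\ga(\theta))\big|^{2}\,d\nu(\theta)\,dt\ \lesssim_{\eps}\ R^{\eps}\,R^{\,3-\alpha'-2u'}.
\]
Indeed, granting this, $\int_{0}^{1}I_{s}\,d\nu\lesssim 1+\sum_{R\ \mathrm{dyadic}}R^{s-1}R^{\eps}R^{3-\alpha'-2u'}=1+\sum_{R}R^{\,s+2-\alpha'-2u'+\eps}$, which converges for $\eps$ small since $s+2-\alpha'-2u'<0$ by the choice of $\alpha',u'$. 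This is the desired contradiction.

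It remains to prove the key estimate, which is the crux. Expanding the square, the left side equals $\iint\Big(\int_{|t|\sim R}\int_{0}^{1}e^{-2\pi i t\langle x-y,\ga(\theta)\rangle}\,d\nu(\theta)\,dt\Big)\,d\mu(x)\,d\mu(y)$, an $L^{2}$ mass of $\wh\mu$ distributed over the truncated cone $\{t\ga(\theta):t\sim R,\ \theta\in\supp\nu\}$, with Lebesgue measure along the generators and $\nu$ in the angular variable. Because $\ga$ is non-degenerate, after dilation by $R^{-1}$ this cone is a small perturbation of a fixed arc of the light cone, hence admits the Bourgain--Demeter $\ell^{2}$-decoupling into $\sim R^{1/2}$ angular sectors of width $R^{-1/2}$ at scale $R$. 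Replacing $\mu$ by its mollification at scale $R^{-1}$ (which alters $\wh\mu(t\ga(\theta))$ negligibly for $|t|\sim R$) and using duality, the left side becomes $\sup_{\|h\|_{L^{2}(d\nu\,dt)}\le1}|\langle\mu\ast\phi_{R^{-1}},\overline{Eh}\rangle|^{2}$, where $Eh(x)=\int_{|t|\sim R}\int_{0}^{1}e^{2\pi i t\langle x,\ga(\theta)\rangle}h(\theta,t)\,d\nu(\theta)\,dt$ is Fourier supported on that cone. Applying cone decoupling to $Eh$, estimating each single-sector term by the $\alpha'$-Frostman bound for $\mu$ (controlling the $\mu$-mass on a dual plank) and the $u'$-Frostman bound for $\nu$ (controlling both the number of sectors meeting $\supp\nu$ and the $\nu$-mass of each), and reassembling orthogonally, one is led to the exponent $3-\alpha'-2u'$. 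The main obstacle is that a direct use of decoupling together with H\"older's and Bernstein's inequalities is lossy and gives a strictly worse exponent; to obtain the sharp value one must argue at the level of superlevel sets, performing a high--low decomposition of $|\wh\mu|^{2}$, bounding the superlevel sets of the high part via decoupling, and iterating over scales, in the style of the superlevel-set refinements of decoupling. Making this high--low machinery interact cleanly with the fractal angular measure $\nu$, and controlling the ensuing dyadic parameters without losing powers of $R$, is the technical heart of the argument.
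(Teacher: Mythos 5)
Your reduction (Frostman measures, the identity $I_s((\rho_\theta)_*\mu)=c_s\int|t|^{s-1}|\wh\mu(t\ga(\theta))|^2\,dt$, and the dyadic summation showing that the ``key estimate'' would imply the theorem) is correct and the numerology matches. But the proof stops exactly where the theorem starts: the key estimate is never proved, and you yourself concede that the direct decoupling-plus-H\"older argument you outline is lossy and that the superlevel-set/high--low machinery ``is the technical heart of the argument.'' Worse, the key estimate as you state it --- an inequality depending only on the Frostman exponents $\alpha'$ of $\mu$ and $u'$ of $\nu$ --- is false, so no amount of decoupling can establish it. Take $\theta_0\in(0,1)$, let $v=\ga(\theta_0)\times\ga'(\theta_0)/|\ga(\theta_0)\times\ga'(\theta_0)|$, let $\mu$ be length measure on a unit segment through the origin in direction $v$ (so $\alpha'=1$) and $\nu$ Lebesgue measure on $[0,1]$ (so $u'=1$; note $\alpha'+2u'-2=1>s$, so this lies in the regime you need). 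Then $|\wh\mu(\xi)|\gtrsim 1$ whenever $|\xi\cdot v|\lesssim 1$, and non-degeneracy gives $|\ga(\theta)\cdot v|\sim|\theta-\theta_0|^2$, so $|\wh\mu(t\ga(\theta))|\gtrsim 1$ for all $|t|\sim R$ and $|\theta-\theta_0|\lesssim R^{-1/2}$; hence the left-hand side is $\gtrsim R\cdot R^{-1/2}=R^{1/2}$, while your right-hand side is $R^{3-\alpha'-2u'+\eps}=R^{\eps}$. This tangency enemy is precisely why purely $L^2$-average arguments give weaker exponents (of the type $(4-\dim A)/3$ discussed in the paper's remarks), and it shows that any correct argument must use the actual hypothesis that $\dim\rho_\theta(A)<s$ for $\theta$ in the support of $\nu$, not merely the Frostman bounds.

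That is exactly what the paper's proof does and your sketch does not. The paper discretizes the problem (Theorem \ref{linediscrt}): it covers each $\rho_\theta(A)$ by intervals satisfying an $s$-dimensional condition (Lemma \ref{usefullemma}), pigeonholes to a single scale $\de$ and to a $(\de,t)$-set of directions, and then runs an $L^4$ argument in which both fractal conditions enter. The $s$-dimensional condition on the slabs is used to annihilate the low-frequency part of $f$ (the choice $K\sim(\log\de^{-1})^{2/(1-s)}$, possible only because $s<1$), and the $(\de,t)$-spacing of the directions is fed into a fractal small cap decoupling at exponent $4$ (Theorem \ref{fractaldec}), proved from the Guth--Wang--Zhang wave envelope estimate, to bound $\int|f_{high}|^4$. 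If you want to salvage your approach, you would have to build the $s$-dimensional structure of the projected sets and the $t$-dimensional structure of the direction set into the estimate itself --- at which point you are reconstructing the paper's incidence estimate rather than proving the stated $L^2$ inequality.
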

 
As a corollary, we have

\begin{corollary}\label{mainline}
Suppose $A\subset \R^3$ is a Borel set of Hausdorff dimension $\alpha$. Then we have
$$ \dim(\rho_\theta(A))=\min\{1,\alpha\},\textup{~for~a.e.~}\theta\in[0,1]. $$
\end{corollary}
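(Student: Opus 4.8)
The plan is to derive Corollary~\ref{mainline} from Theorem~\ref{mainlineex} by the standard passage from an exceptional-set estimate to an almost-everywhere statement; all the substantive content already lives in the theorem. First I would handle the upper bound, which in fact holds for \emph{every} $\theta\in[0,1]$: the orthogonal projection $\rho_\theta$ is $1$-Lipschitz, so $\dim(\rho_\theta(A))\le\dim(A)=\alpha$, while $\rho_\theta(A)\subset l_\theta$ forces $\dim(\rho_\theta(A))\le 1$; hence $\dim(\rho_\theta(A))\le\min\{1,\alpha\}$ for all $\theta$. If $\alpha=0$ this already gives the corollary, so we may assume $\alpha>0$.

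For the matching lower bound I would show that the set $B=\{\theta\in[0,1]:\dim(\rho_\theta(A))<\min\{1,\alpha\}\}$ is Lebesgue-null. For any $s$ with $0\le s<\min\{1,\alpha\}$ we have $s-\alpha<0$, so $1+\frac{s-\alpha}{2}<1$, and Theorem~\ref{mainlineex} gives $\dim(E_s)\le\max\{0,1+\frac{s-\alpha}{2}\}<1$; since a subset of $\R$ of Hausdorff dimension strictly below $1$ has vanishing $\mathcal H^1$-measure, it is Lebesgue-null. Choosing $s_n\uparrow\min\{1,\alpha\}$ with $0\le s_n<\min\{1,\alpha\}$ (say $s_n=\min\{1,\alpha\}-1/n$ for $n$ large), one checks $B=\bigcup_n E_{s_n}$: indeed $E_{s_n}\subset B$ for every $n$, and if $\dim(\rho_\theta(A))<\min\{1,\alpha\}$ then $\dim(\rho_\theta(A))<s_n$ for all large $n$. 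Thus $B$ is a countable union of null sets, so $|B|=0$, and for a.e.\ $\theta$ we get $\dim(\rho_\theta(A))\ge\min\{1,\alpha\}$, which combined with the upper bound yields the claimed equality.

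There is no genuine obstacle here — the difficulty of the result is entirely concentrated in Theorem~\ref{mainlineex}. The only points requiring a moment's care are the reduction to a countable union via the monotone sequence $s_n$, the observation that the term $\max\{0,\cdot\}$ in \eqref{exestimate} never interferes (we only need $\dim(E_s)<1$, not its precise value), and the elementary fact that on the real line Hausdorff dimension strictly below $1$ implies Lebesgue measure zero.
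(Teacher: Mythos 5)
Your derivation is correct and is exactly the standard passage from the exceptional-set estimate of Theorem~\ref{mainlineex} to the almost-everywhere statement, which the paper leaves implicit (it states the corollary without proof): the Lipschitz upper bound, the countable union $\bigcup_n E_{s_n}$ with $s_n\uparrow\min\{1,\alpha\}$, and the fact that $\dim(E_{s_n})<1$ forces Lebesgue measure zero. Nothing is missing.
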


\begin{remark}
{\rm
The proof of Theorem \ref{mainlineex} relies on the small cap decoupling for the general cone.
We also remark that, for the set of directions determined by the model curve $\ga_\circ$, K\"aenm\"aki, Orponen and Venieri can prove the exceptional set estimate with upper bound $\dim(E_s)\le\frac{\alpha+s}{2\alpha}$ when $\alpha\le 1$ (see \cite{kaenmaki2017marstrand} Theorem 1.3).
The novelty of our paper is that we prove a Falconer-type exceptional set estimate for general non-degenerate curve, hence Corollary \ref{mainline}.
}
\end{remark}

\begin{remark}
{\rm
Pramanik, Yang and Zahl \cite{zahl2022} have also recently proved Corollary \ref{mainline} with an exceptional set estimate of the form 
$\dim(E_s)\le s$, compared to \eqref{exestimate}.
Their proof is based on some incidence estimates for curves in the spirit of Wolff's circular maximal function estimate. The estimates in \cite{zahl2022} hold for curves that are only $C^2$, which requires a very different proof from earlier work of Wolff and others on these problems.
}
\end{remark}

\begin{remark}
{\rm
It is also an interesting question to ask for the estimate of the set
$$ E=\{\theta\in[0,1]: \cH^1(\rho_\theta(A))=0\} $$
which consists of directions to which the projection of $A$ has zero measure.
We notice that recently Harris \cite{harris2022length} proved that
\begin{equation}\label{harris}
    \dim (\{\theta\in[0,1]: \cH^1(\rho_\theta(A))=0\})\le \frac{4-\dim A}{3}. 
\end{equation} 
Intuitively, one may think of $E$ as $E_1$ ($E_1$ is defined in \eqref{Es}). The main result of this paper \eqref{exestimate} yields $\dim(E_1)\le \frac{3-\dim A}{2}$ which is better than the bound $\frac{4-\dim A}{3}$. This shows that \eqref{harris} cannot imply \eqref{exestimate}.

}
\end{remark}

\medskip

Now we briefly discuss the history of projection theory.
Projection theory dates back to Marstrand\cite{marstrand1954some}, who showed that if $A$ is a Borel set in $\R^2$, then the projection of $A$ onto almost every line through the origin has Hausdorff dimension $\min\{1,\dim A\}$. This was generalized to higher dimensions by Mattila\cite{mattila1975hausdorff}, who showed that if $A$ is a Borel set in $\R^n$, then the projection of $A$ onto almost every $k$-plane through the origin has Hausdorff dimension $\min\{k,\dim A\}$.
More recently, F\"assler and Orponen \cite{fassler2014restricted} started to consider the projection problems when the direction set is restricted to some submanifold of Grassmannian. Such problems are known as the restricted projection problem. F\"assler and Orponen made conjectures about restricted projections to lines and planes in $\R^3$ (see Conjecture 1.6 in \cite{fassler2014restricted}). In this paper, we give an answer to the conjecture about the projections to lines.

\section{Projection to one dimensional family of lines}\label{sectionline}

In this section, we prove Theorem \ref{mainline}. Theorem \ref{mainline} will be a result of an incidence estimate that we are going to state later. Recall that $\ga:[0,1]\rightarrow \S^2$ a non-degenerate curve.

\begin{definition}
For a number $\de>0$ and any set $X$, we use $|X|_\de$ to denote the maximal number of $\de$-separated points in $X$.
\end{definition}

\begin{definition}[$(\de,s)$-set]\label{deltaset}
Let $P\subset \R^n$ be a bounded set. Let $\de>0$ be a dyadic number, and let $0\le s\le d$. We say that $P$ is a $(\de,s)$-set if
$$ |P\cap B_r|_\de\lesssim  (r/\de)^s ,  $$
for any $B_r$ being a ball of radius $r$ with $\de\le r\le 1$.
\end{definition}

Let $\cH^t_\infty$ denote the $t$-dimensional Hausdorff content which is defined as
$$ \cH^t_\infty(B):=\inf\{ \sum_i r(B_i)^t: B\subset \cup_i B_i \}. $$
We recall the following  result (see \cite{fassler2014restricted} Lemma 3.13). 
\begin{lemma}\label{lemdelta}
Let $\de,s>0$, and $B\subset \R^n$ with $\cH_\infty^s(B):=\kappa>0$. Then there exists a $(\de,s)$-set $P\subset B$ with cardinality $\#P\gtrsim \kappa \de^{-s}$.
\end{lemma}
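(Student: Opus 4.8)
The plan is to obtain $P$ as a \emph{maximal} admissible configuration and then to show that maximality forces the stated lower bound on $\#P$. Fix the absolute constant $C_{0}=2$, and call a finite $\de$-separated set $Q\subset B$ \emph{admissible} if $\#\big(Q\cap B(x,r)\big)\le C_{0}(r/\de)^{s}$ for every ball $B(x,r)$ with $\de\le r\le 1$. (We may assume $B$ is bounded, so that $\de$-separated subsets of $B$ are finite; in general one reduces to this case, or applies Zorn's lemma directly.) Let $P\subset B$ be admissible and maximal with respect to inclusion among admissible subsets of $B$; such a $P$ exists because the empty set is admissible. Since $P$ is $\de$-separated we have $|P\cap B_{r}|_{\de}=\#(P\cap B_{r})$, so $P$ is automatically a $(\de,s)$-set, and the remaining task is the lower bound $\#P\gtrsim\kappa\de^{-s}$.

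Suppose, for contradiction, that $\#P<c\,\kappa\,\de^{-s}$ for a small constant $c=c(n)$ to be fixed at the end. Each ball $B(p,\de)$ has $\cH^{s}_{\infty}$-content at most $\de^{s}$, so subadditivity of Hausdorff content gives $\cH^{s}_{\infty}\big(\bigcup_{p\in P}B(p,\de)\big)\le\#P\cdot\de^{s}<c\kappa$, whence $\wt B:=B\setminus\bigcup_{p\in P}B(p,\de)$ satisfies $\cH^{s}_{\infty}(\wt B)\ge(1-c)\kappa\ge\kappa/2$ once $c\le\tfrac12$. Fix $q\in\wt B$. Then $\dist(q,P)\ge\de$, so $P\cup\{q\}$ is still $\de$-separated; by maximality it cannot be admissible, i.e. there is a radius $r_{q}\in[\de,1]$ and a ball $B(x,r_{q})\ni q$ with $\#\big(P\cap B(x,r_{q})\big)>C_{0}(r_{q}/\de)^{s}-1\ge(r_{q}/\de)^{s}$, the last inequality using $C_{0}=2$ and $(r_{q}/\de)^{s}\ge1$. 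Since $q\in B(x,r_{q})$ we have $B(x,r_{q})\subset B(q,2r_{q})$, so every $q\in\wt B$ lies in a ``heavy'' ball $D_{q}:=B(q,2r_{q})$, of radius at most $2$, with $\#(P\cap D_{q})\ge 2^{-s}(2r_{q}/\de)^{s}$.

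It remains to feed $\{D_{q}:q\in\wt B\}$ into the $5r$-covering (Vitali) lemma: since the radii are bounded, there is a countable pairwise disjoint subfamily $\{D_{q_{i}}\}_{i}$ with $\wt B\subset\bigcup_{i}5D_{q_{i}}$. Comparing contents, $\tfrac{\kappa}{2}\le\cH^{s}_{\infty}(\wt B)\le\sum_{i}(10\,r_{q_{i}})^{s}=10^{s}\sum_{i}r_{q_{i}}^{s}$, while disjointness of the $D_{q_{i}}$ yields
\[
\#P\ \ge\ \sum_{i}\#\big(P\cap D_{q_{i}}\big)\ \ge\ 2^{-s}\de^{-s}\sum_{i}(2r_{q_{i}})^{s}\ =\ \de^{-s}\sum_{i}r_{q_{i}}^{s}\ \ge\ \frac{1}{2\cdot 10^{s}}\,\kappa\,\de^{-s}.
\]
Choosing $c<\tfrac12\cdot 10^{-s}$ (a dimensional constant; we may assume $s\le n$, else $\kappa=0$ and there is nothing to prove) contradicts $\#P<c\kappa\de^{-s}$, and we conclude $\#P\ge\tfrac12\cdot 10^{-s}\,\kappa\,\de^{-s}\gtrsim\kappa\de^{-s}$.

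I expect the only delicate points to be bookkeeping: that ``$q$ cannot be added to $P$'' really produces a ball carrying $\gtrsim(r_{q}/\de)^{s}$ points of $P$ — this is exactly where $C_{0}=2$ is used, so that adding one extra point can destroy admissibility of a ball only if that ball was already essentially full — and that the losses along the way (subadditivity of $\cH^{s}_{\infty}$, the inclusion $B(x,r_{q})\subset B(q,2r_{q})$, the factor $5$ in the covering lemma) combine into one dimensional constant. The argument is soft and purely combinatorial-geometric, so I do not anticipate a real obstacle; the one place needing a little care is calibrating the constants so that the final bound strictly beats the assumed upper bound $c\kappa\de^{-s}$.
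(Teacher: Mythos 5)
Your proof is correct. Note that the paper does not actually prove this lemma; it is quoted from F\"assler--Orponen (Lemma 3.13 of \cite{fassler2014restricted}), where the standard argument runs through a dyadic-cube construction: one builds the $(\de,s)$-set scale by scale down a tree of dyadic cubes, using the content lower bound to guarantee enough surviving cubes at scale $\de$. Your route --- take a maximal $\de$-separated set subject to the capped counting condition $\#(Q\cap B(x,r))\le 2(r/\de)^s$, and show that if it were too small then the uncovered part $\wt B$ still carries content $\gtrsim\kappa$, every point of which certifies a ``full'' ball, which you then disjointify with the $5r$-covering lemma --- is a genuinely different and self-contained argument. The key steps all check out: the choice $C_0=2$ correctly ensures that adding one point can only break a ball already containing $>(r_q/\de)^s$ points of $P$ (and the violating ball must contain $q$, else $P$ itself were inadmissible); the factors $2^{-s}$, $2^s$, $10^s$ combine as claimed; and existence of a maximal admissible set is unproblematic (finiteness for bounded $B$, or Zorn in general, since admissibility is preserved under unions of chains because each ball condition involves only finitely many points). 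The resulting implicit constant depends on $s$ and $n$, which is consistent with the $\gtrsim$ in the statement and with how the lemma is used in the paper ($B\subset[0,1]$, $s=t$ fixed). The main advantage of your argument is its elementary, measure-free flavor (only subadditivity of $\cH^s_\infty$ and Vitali); the dyadic construction in \cite{fassler2014restricted} gives essentially the same quantitative conclusion and is the one the paper relies on.
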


Next, we state a useful lemma. It is proved in \cite{gan2022restricted}, but we still provide the full details here. The lemma roughly says that given a set $X$ of Hausdorff dimension less than $s$, then we can find a covering of $X$ by squares of dyadic lengths which satisfy a certain $s$-dimensional condition. Let us use $\cD_{2^{-k}}$ to denote the lattice squares of length $2^{-k}$ in $[0,1]^2$.

\begin{lemma}\label{usefullemma}
Suppose $X\subset [0,1]^2$ with $\dim X< s$. Then for any $\e>0$, there exist dyadic squares $\cC_{2^{-k}}\subset \cD_{2^{-k}}$ $(k>0)$ so that 
\begin{enumerate}
    \item $X\subset \bigcup_{k>0} \bigcup_{D\in\cC_{2^{-k}}}D, $
    \item $\sum_{k>0}\sum_{D\in\cC_{2^{-k}}}r(D)^s\le \e$,
    \item $\cC_{2^{-k}}$ satisfies the $s$-dimensional condition: For $l<k$ and any $D\in \cD_{2^{-l}}$, we have $\#\{D'\in\cC_{2^{-k}}: D'\subset D\}\le 2^{(k-l)s}$.
\end{enumerate}
\end{lemma}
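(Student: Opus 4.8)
The plan is to turn the hypothesis $\dim X<s$ into an efficient cover of $X$ by dyadic squares, and then run a single stopping-time step that repairs the $s$-dimensional packing condition (3) without enlarging the $s$-sum in (2). For the first half: since $\dim X<s$ we have $\cH^s(X)=0$, so for every $\eta>0$ there is a countable cover of $X$ by sets of diameter $<\eta$ such that the $s$-th powers of these diameters sum to an arbitrarily small quantity. Enclosing each such set in a ball and that ball in $O(1)$ dyadic squares of comparable side length, and absorbing the resulting constant into $\e$, we obtain dyadic squares $\cE=\bigcup_{k>0}\cE_k$, $\cE_k\subset\cD_{2^{-k}}$, with $X\subset\bigcup_{D\in\cE}D$ and $\sum_{D\in\cE}r(D)^s<\e$. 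This reduction is routine.

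For a dyadic square $Q\subset[0,1]^2$ put $\mu(Q):=\sum_{D\in\cE,\ D\subseteq Q}r(D)^s$; this is monotone in $Q$, and $\mu([0,1]^2)<\e$. We may assume $\e\le 1$ (for $\e>1$ one can simply take $\cC=\{[0,1]^2\}$), so that $[0,1]^2$ is not fat, where we call $Q$ \emph{fat} if $\mu(Q)\ge r(Q)^s$. Let $\mathcal{F}$ be the collection of maximal fat squares; since two dyadic squares are nested or disjoint and no maximal fat square is contained in another, the members of $\mathcal{F}$ are pairwise disjoint. Define
\[
\cC:=\{\,D\in\cE:\ D\text{ is contained in no fat square}\,\}\ \cup\ \mathcal{F},
\]
and let $\cC_{2^{-k}}\subset\cD_{2^{-k}}$ be the members of $\cC$ of side length $2^{-k}$.

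I verify the three conclusions. Conclusion (1): each $x\in X$ lies in some $D\in\cE$; if $D$ is contained in no fat square then $D\in\cC$, and otherwise $D$ lies in a maximal fat square, which belongs to $\mathcal{F}\subset\cC$. Conclusion (2): each $Q\in\mathcal{F}$ satisfies $r(Q)^s\le\mu(Q)$, and since $\mathcal{F}$ is a disjoint family, every $D\in\cE$ contained in some member of $\mathcal{F}$ is contained in exactly one; hence $\sum_{Q\in\mathcal{F}}r(Q)^s\le\sum_{Q\in\mathcal{F}}\mu(Q)=\sum_{D\in\cE:\ D\subseteq Q\text{ for some }Q\in\mathcal{F}}r(D)^s$. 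Adding the $s$-sum of the remaining squares of $\cC$ (the $D\in\cE$ lying in no fat square) and observing that these two families together exhaust $\cE$, we get $\sum_{D\in\cC}r(D)^s\le\sum_{D\in\cE}r(D)^s<\e$. Conclusion (3): it suffices to prove that $\sum_{D'\in\cC,\ D'\subsetneq D_0}r(D')^s\le r(D_0)^s$ for every dyadic $D_0\subset[0,1]^2$, since then, for $D_0\in\cD_{2^{-l}}$ and any $k>l$, restricting the sum to the scale $2^{-k}$ gives $\#\{D'\in\cC_{2^{-k}}:D'\subset D_0\}\,(2^{-k})^s\le (2^{-l})^s$, i.e. the required $\#\{D'\in\cC_{2^{-k}}:D'\subset D_0\}\le 2^{(k-l)s}$. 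Now, if $D_0$ is contained in a fat square $R$, then no member of $\cC$ lies strictly inside $D_0$: a surviving $\cE$-square inside $D_0$ would lie in the fat square $R$, a contradiction; and a member of $\mathcal{F}$ strictly inside $D_0$ would be strictly inside $R$, contradicting its maximality. If instead $D_0$ is contained in no fat square, then $D_0$ itself is not fat, and the same disjointness bookkeeping as in (2), applied to the members of $\cC$ strictly inside $D_0$, bounds $\sum_{D'\in\cC,\ D'\subsetneq D_0}r(D')^s$ by $\mu(D_0)<r(D_0)^s$. This proves (3).

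The only step needing an idea rather than routine bookkeeping is finding a formulation that avoids iteration. A naive approach would repeatedly replace a cluster of squares violating (3) by its dyadic parent, but then one has to control termination. The stopping-time formulation above instead produces $\cC$ in one pass; the reason it works is the elementary fact that $2^{(k-l)s}$ dyadic squares of side $2^{-k}$ inside a square of side $2^{-l}$ already contribute $s$-sum $(2^{-l})^s$, so a square whose total interior $s$-mass lies below the $s$-th power of its side — a non-fat square — cannot violate (3), while the fatness threshold is exactly what prevents the $s$-sum from growing when clusters are promoted to maximal fat squares. Everything else — the passage to dyadic covers, the constant losses, and the containment bookkeeping — is standard.
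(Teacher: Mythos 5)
Your proof is correct, but it follows a genuinely different route from the paper. The paper takes the family of \emph{all} coverings satisfying (1)--(2), equips it with a partial order, extracts a maximal covering via Zorn's lemma (verifying that chains have upper bounds), and then observes that maximality forces (3): a violation of the $s$-dimensional condition would let one replace the offending squares by their common ancestor $D$, producing a strictly larger covering with smaller $s$-sum. You instead run a one-pass stopping-time construction on a fixed efficient cover $\cE$: you define the local mass $\mu(Q)=\sum_{D\in\cE,\,D\subseteq Q}r(D)^s$, call $Q$ fat when $\mu(Q)\ge r(Q)^s$, promote each maximal fat square, and keep the $\cE$-squares lying in no fat square. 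This is entirely constructive (no Zorn's lemma and no chain verification), and it in fact yields the stronger Carleson-type packing bound $\sum_{D'\in\cC,\,D'\subsetneq D_0}r(D')^s\le r(D_0)^s$ for every dyadic $D_0$, of which condition (3) is an immediate consequence; the disjointness bookkeeping for the maximal fat squares is carried out correctly, and the existence of maximal fat squares is immediate since $[0,1]^2$ itself is not fat. Two cosmetic points: your fallback $\cC=\{[0,1]^2\}$ for $\e>1$ clashes with the lemma's requirement $k>0$, so it is cleaner simply to note that the case $\e\le 1$ implies the general case; and you should state explicitly (as you implicitly use) that every fat square lies in a maximal one because its dyadic ancestors form a finite chain terminating at the non-fat unit square. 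Neither affects correctness.
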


\begin{proof}[Proof of Lemma \ref{usefullemma}]
Consider all the covering $\cC$ of $X$ by dyadic lattice squares that satisfy condition (1), (2) in Lemma \ref{usefullemma}, i.e., $\cC\subset \bigcup_{k>0}\cD_{2^{-k}}$, $X\subset\bigcup_{D\in\cC}D$ and $\sum_{D\in\cC}r(D)^s\le \e$. We also assume all the dyadic squares in $\cC$ are disjoint. 
We will define an order ``$<$" between any two of such coverings $\cC, \cC'$. First, we define the $k$-th covering number of $\cC$ by
$$ c_k(\cC):=\#(\cC\cap \cD_{2^{-k}}), $$
which is the number of $2^{-k}$-squares in the covering $\cC$.

We say $\cC<\cC'$, if they satisfy: (1) There is a $k_0\ge 0$ such that
$\cC\cap\cD_{2^{-k}}=\cC'\cap\cD_{2^{-k}}$ ($k<k_0$), and $\cC\cap \cD_{2^{-k_0}}\subset \cC'\cap \cD_{2^{-k_0}}$; (2) For any $x\in X$, the square in $\cC'$ that covers $x$ contains the square in $\cC$ that covers $x$. 
It is not hard to check the transitivity: If $\cC<\cC'$ and $\cC'<\cC''$, then $\cC<\cC''$.

Suppose $\cC$ is a covering that is maximal with respect to the order $<$. Then we can show that $\cC$ satisfies condition $(3)$ in Lemma \ref{usefullemma}.
Suppose by contradiction, there exist $l<k$ and $D\in\cD_{2^{-l}}$ so that
\begin{equation}\label{bythis}
    \#\{D'\in \cC\cap\cD_{2^{-k}}:D'\subset D \}>2^{(k-l)s}. 
\end{equation} 
We define another covering $\cC'$ by adding $D$ to $\cC$ and deleting $\{D'\in\cC\cap\cD_{2^{-k}}:D'\subset D\}$ from $\cC$.
It is easy to check that $\cC'$ is still a covering of $X$. By \eqref{bythis}, we can also check $\sum_{D\in\cC'}r(D)^s<\sum_{D\in\cC}r(D)^s\le \e$, so $\cC'$ satisfies $(2)$ in Lemma \ref{usefullemma}. However, $\cC<\cC'$ which contradicts the maximality of $\cC$. 

Now, it suffices to find a maximal element among all the coverings that satisfy condition $(1), (2)$ in Lemma \ref{usefullemma}. First of all, such covering exists by the definition of Hausdorff dimension and $\dim X<s$. By Zorn's lemma, it suffices to find an upper bound for any ascending chain.

Let $\{ \mathcal{C}_j\}_{j \in J}$ be an infinite chain of coverings of $X$. Define 
\[ \mathcal{C} = \bigcap_{j \in J} \bigcup_{\substack{ i \in J \\ \mathcal{C}_i \geq \mathcal{C}_j } } \mathcal{C}_i. \]
We show that $\cC$ is a maximal element of the chain. First, we show that $\cC$ covers $X$.
For $x \in X$, let $D^{(j)}$ be the largest dyadic square in $\bigcup_{\substack{ i \in J \\ \mathcal{C}_i \geq \mathcal{C}_j } } \mathcal{C}_i$ containing $x$. By the definition of the partial order and the fact that chains are totally ordered, $D^{(j)}$ is independent of $j$, and thus $D^{(j)} \in \mathcal{C}$. This shows that $\mathcal{C}$ is a covering of $X$. Let $K \in \mathbb{N}$. Choose $j \in J$ such that $\mathcal{C}_i \cap \mathcal{D}_{2^{-k}} = \mathcal{C}_j \cap \mathcal{D}_{2^{-k}}$ for all $0 \leq k \leq K$ and all $\mathcal{C}_i \geq \mathcal{C}_j$. Then 
\[ \sum_{k=0}^K \sum_{D \in \mathcal{C} \cap \mathcal{D}_{2^{-k} }} r(D)^s \leq \sum_{k=0}^K \sum_{D \in \mathcal{C}_j \cap \mathcal{D}_{2^{-k} }} r(D)^s \leq \varepsilon. \]
Letting $K \to \infty$ gives 
\[ \sum_{D \in \mathcal{C}} r(D)^s \leq \varepsilon. \]
So, $\cC$ satisfies condition $(2)$. By definition, it is easy to check $\cC_i\le \cC$ for every $\cC_i$ in the initial chain. This proves that $\cC$ is a maximal element. \end{proof}

\begin{remark}
{\rm
Besides $[0,1]^2$, this lemma holds for other compact metric spaces, for example $[0,1]^n$ or  $\S^2$. The proof is exactly the same.
}
\end{remark}

Our main effort will be devoted to the proof of the following theorem.

\begin{theorem}\label{linediscrt} Fix $0<s<1$. For each $\e>0$, there exists $C_{s,\e}$ so that the following holds. Let $\d>0$. Let $H\subset B^3(0,1)$ be a union of disjoint $\de$-balls and we use $\#H$ to denote the number of $\de$-balls in $H$. Let $\Theta$ be a $\de$-separated subset of $[0,1]$ such that $\Theta$ is a $(\de,t)$-set and
$\#\Theta\gtrsim (\log\de^{-1})^{-2}\de^{-t}$ for some $t>0$. Assume for each $\theta\in \Theta$, we have a collection of $\de\times 1\times 1$-slabs $\S_\theta$ with normal direction $\ga(\theta)$. $\S_\theta$ satisfies the $s$-dimensional condition: 
\begin{enumerate}
    \item $\#\S_\theta\lesssim \de^{-s}$, 
    \item $\#\{S\in\S_\theta:S\cap B_r\}\lesssim (\frac{r}{\de})^{s}$, for any $B_r$ being a ball of radius $r$ $(\de\le r\le 1)$.
\end{enumerate}
We also assume that each $\de$-ball contained in $H$ intersects $\gtrsim |\log\de^{-1}|^{-2}\#\Theta$ many slabs from $\cup_\theta \S_\theta$.
Then 
\[ (\#\Theta)^4\#H\le C_{s,\e} \de^{-2t-s-2-\e}.\] 
\end{theorem}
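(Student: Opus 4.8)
\emph{Reduction to a superlevel-set estimate.} The plan is to dualise the incidence inequality into an $L^4$ superlevel-set estimate for an exponential sum whose Fourier support lies on the cone generated by $\ga$. Write $N=\#\Theta$ and $M=\#H$; since $\Theta$ is a $(\de,t)$-set we have $N\lesssim\de^{-t}$, so it suffices to prove $M\lesssim_{s,\e}N^{-2}\de^{-s-2-\e}$, which then gives $(\#\Theta)^4\#H\lesssim_{s,\e}N^2\de^{-s-2-\e}\le C_{s,\e}\de^{-2t-s-2-\e}$. First I would run a dyadic pigeonholing, costing only powers of $\log\de^{-1}$ (this is exactly why the hypotheses carry the weights $(\log\de^{-1})^{-2}$), after which every $\de$-ball $B\subset H$ meets a slab from $\sim\mu$ distinct parameters $\theta\in\Theta$, with $\mu\gtrsim(\log\de^{-1})^{-2}N$; here one uses that the slabs inside a single $\S_\theta$ are $\de$-separated, so $B$ meets $O(1)$ of them. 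For each $\theta$ let $g_\theta$ be a smooth approximation of $\mathbbm 1_{\bigcup_{S\in\S_\theta}S}$ modulated by $e^{2\pi i\,\de^{-1}\ga(\theta)\cdot x}$; then $\widehat{g_\theta}$ is supported in a $1\times1\times\de^{-1}$ slab about the ray through $\de^{-1}\ga(\theta)$, and under the dilation $\xi\mapsto\de\xi$ these become precisely the $\de$-tube small caps of the $\de$-neighbourhood of the truncated cone $\Ga_\ga=\{\lambda\ga(\theta):\lambda\sim1,\ \theta\in[0,1]\}$. Setting $f=\sum_{\theta\in\Theta}\pm g_\theta$, the structural hypotheses translate into: $\#\S_\theta\lesssim\de^{-s}$ gives $\|g_\theta\|_{L^4}^4\approx\#\S_\theta\cdot\de\lesssim\de^{1-s}$; the $r$-ball form of the $s$-condition controls the portion of $\bigcup\S_\theta$ in any $r$-ball; and the $(\de,t)$-condition on $\Theta$ bounds, for every $r\in[\de,1]$, the number of caps $\tau_\theta$ lying in a fixed $r$-cap of $\Ga_\ga$ by $(r/\de)^t$. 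Finally the incidence hypothesis makes $H$ essentially the superlevel set $\{x:\sum_\theta|g_\theta(x)|^2\gtrsim\mu\}$, so that by Khintchine, for a good choice of signs,
\[ \mu^2\,\de^3\,M \;\lesssim\; \int_H\Big(\sum_\theta|g_\theta(x)|^2\Big)^2\,dx \;\lesssim\; |\{x:|f(x)|\gtrsim\mu^{1/2}\}|\cdot\mu^2 + (\text{tail}). \]

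\emph{The decoupling / superlevel-set input.} The main tool is small cap decoupling for the general cone $\Ga_\ga$; here the non-degeneracy $\det(\ga(\theta),\ga'(\theta),\ga''(\theta))\neq0$ is exactly the curvature that makes this decoupling available. I would emphasise at the outset that the naive route fails: plain small cap $\ell^2L^4$ cone decoupling, combined with Chebyshev, returns only the trivial bound $M\lesssim\de^{-s-2}$, and even Chebyshev at $L^4$ with the \emph{sharp} $L^4$ norm of $f$ falls short by a factor $\de^{-t}$, because neither sees how efficiently the $(\de,t)$-set $\Theta$ and the $(\de,s)$-sets $\S_\theta$ are spread out. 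So instead one needs a genuine superlevel-set estimate for $\Ga_\ga$ (obtained from small cap decoupling together with a high-low frequency decomposition of $|f|^2$, in the spirit of recent work of Guth--Maldague), which is strictly stronger than what $L^4$-decoupling plus Chebyshev gives; and one must run it through all dyadic scales $r\in[\de,1]$. A parabolic rescaling of $\Ga_\ga$ identifies an $r^{1/2}$-cap of the cone with a full unit cone at rescaled thickness, and crucially sends $\Theta$, resp. each $\S_\theta$, to sets that still satisfy the $(\,\cdot\,,t)$-, resp. $(\,\cdot\,,s)$-, conditions with rescaled parameters, so the two fractal hypotheses feed into the estimate at every step. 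Carrying this bookkeeping through (the number of scales producing the $\de^{-\e}$), together with $\|g_\theta\|_{L^4}^4\lesssim\de^{1-s}$ and the cap-count bound $(r/\de)^t$, should yield $|\{x:|f(x)|\gtrsim\mu^{1/2}\}|\lesssim_{s,\e}\de^{-\e}\,N^{-2}\,\de^{-s+1}$, and hence via the lower bound above $M\lesssim_{s,\e}N^{-2}\de^{-s-2-\e}$, which is the claim.

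\emph{The main obstacle.} The delicate point is precisely this multi-scale synthesis: extracting from the small cap decoupling for a \emph{general} (non-circular) cone a superlevel-set bound that exploits the two fractal hypotheses so that the $N^{-2}$ saving over the trivial incidence count actually emerges. Equivalently, one has to handle the near-diagonal part of $\Ga_\ga+\Ga_\ga$ — the pairs of caps $\tau_\theta,\tau_{\theta'}$ with $\theta'\to\theta$, where the $L^4$/bi-orthogonality structure of the cone degenerates — by pulling the degenerate piece out and rescaling rather than estimating it crudely, and it is here that the non-degeneracy of $\ga$ enters a second time, now as the Vandermonde-type lower bound $|\det(\ga(\theta_1),\ga(\theta_2),\ga(\theta_3))|\gtrsim\prod_{i<j}|\theta_i-\theta_j|$, which controls how many $\de$-balls three slabs with separated normals can simultaneously contain.

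\emph{Secondary points to check.} I would also have to verify that the initial pigeonholing costs only logarithmic factors; that the modulated cutoffs $g_\theta$ genuinely have disjoint Fourier supports which tile the $\de$-neighbourhood of $\Ga_\ga$ after rescaling; that Khintchine's inequality legitimately produces the pointwise lower bound $\sum_\theta|g_\theta(x)|^2\gtrsim\mu$ on $H$ for some sign choice, with acceptable tails; and that both the $s$-dimensional condition on the $\S_\theta$ and the $(\de,t)$-condition on $\Theta$ are preserved (with the correct rescaled exponents) under each parabolic rescaling step of the induction on scales.
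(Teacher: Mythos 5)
Your overall framing is the same as ours: reduce the incidence bound to an $L^4$ estimate for a function whose Fourier support lies in $\de\times\de\times 1$ caps of the cone over $\ga$, with the non-degeneracy of $\ga$ supplying the curvature, and with the fractal hypotheses on $\Theta$ and on the $\S_\theta$ responsible for the gain over the trivial count. But the decisive step is not actually supplied. The entire content of the theorem is the estimate you describe as ``should yield $|\{|f|\gtrsim \mu^{1/2}\}|\lesssim \de^{-\e}N^{-2}\de^{-s+1}$'' and then flag as ``the main obstacle'': you propose to obtain it from a Guth--Maldague-type high--low analysis of $|f|^2$, a multi-scale induction with parabolic rescaling, and a Vandermonde-type three-slab incidence bound, none of which is carried out, and it is exactly there that the factor $\de^{-t}$ (equivalently the $N^{-2}$ saving) has to be produced. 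In the paper this step is Theorem \ref{fractaldec}, a \emph{fractal small cap decoupling} $\int_{B_{\de^{-1}}}|g|^4\lesssim_\e \de^{-\e-t}\sum_\ga\int|g_\ga|^4$ for $t$-spaced caps, and it is deduced in one stroke from the Guth--Wang--Zhang wave envelope estimate (Theorem \ref{GWZ}, valid for general cones): the $t$-spacing of $\Theta$ enters only through counting caps $\ga$ inside the planks $\tau_s$ and inside the intermediate planks $\beta$, giving $(s/\de)^t\cdot(s^{-1})^t=\de^{-t}$ per wave envelope; no induction on scales or rescaling bookkeeping is needed. In particular your dichotomy --- that ``Chebyshev at $L^4$ with the sharp $L^4$ norm falls short by $\de^{-t}$'', so one must leave the decoupling framework for a genuinely stronger superlevel-set estimate --- is misleading: the paper stays entirely within the $L^4$/decoupling route and instead strengthens the \emph{decoupling constant} itself using the spacing hypothesis.

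Two further points. First, in the paper the $s$-dimensional condition (2) on $\S_\theta$ is used for a specific purpose you have no counterpart for: since the bump functions adapted to the slabs have Fourier support in tubes through the origin, one performs a high--low split of $f$ at frequency scale $K^{-1}$ with $K\sim(\log\de^{-1})^{2/(1-s)}$, and condition (2) gives $|f_{low}|\lesssim K^{s-1}\#\Theta$, which is negligible precisely because $s<1$; the high part then lives on the truncated cone $\Ga_{K^{-1}}$ where the decoupling applies. Your modulation trick sidesteps the origin issue, but then positivity is lost, and your displayed chain $\int_H(\sum_\theta|g_\theta|^2)^2\lesssim|\{|f|\gtrsim\mu^{1/2}\}|\mu^2+(\text{tail})$ is not valid for a fixed sign choice: one must average over signs, extract an $\omega$ with $\int_H|f_\omega|^4\gtrsim\mu^2\de^3M$, and feed this into an upper bound for $\|f_\omega\|_4^4$ --- at which point you are back to needing exactly the fractal decoupling above, and the role of the $s$-condition in your argument remains unassigned. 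These latter issues are repairable; the missing main estimate is the genuine gap.
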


\subsection{\texorpdfstring{$\d$}{Lg}-discretization of the projection problem}
We show Theorem \ref{linediscrt} implies Theorem \ref{mainline} in this subsection.

\begin{proof}[Proof of Theorem \ref{mainline} assuming Theorem \ref{linediscrt}]
Suppose $A\subset \R^3$ is a Borel set of Hausdorff dimension $\alpha$.
We may assume $A\subset B^3(0,1)$.
Recall the definition of the exceptional set
$$ E_s=\{\theta\in[0,1]:\dim\rho_\theta(A)<s\}. $$
If $\dim(E_s)=0$, then there is nothing to prove. Therefore, we assume $\dim(E_s)>0$.
Recall the definition of the $t$-dimensional Hausdorff content is given by
$$ \cH^t_\infty(B):=\inf\{\sum_i r(B_i)^t:B\subset \cup_i B_i\}. $$
A property for the Hausdorff dimension is that 
$$ \dim(B)=\sup\{t:\cH^t_\infty(B)>0\}. $$
We choose $a<\dim (A),t<\dim (E_s)$. Then $\cH^t_\infty(E_s)>0$, and by Frostman's lemma there exists a probability measure $\nu_A$ supported on $A$ satisfying $\nu_A(B_r)\lesssim r^a$ for any $B_r$ being a ball of radius $r$. We only need to prove 
$$ a\le 2+s-2t, $$
since then we can send $a\rightarrow \dim(A)$ and $t\rightarrow \dim(E_s)$. As $a$ and $t$ are fixed, we may assume $\cH^t_\infty(E_s)\sim 1$ is a constant.

Fix a $\theta\in E_s$. By definition we have $\dim \rho_\theta(A)<s$. We also fix a small number $\en$ which we will later send to $0$.
By Lemma \ref{usefullemma}, we can find a covering of $\rho_\theta(A)$ by intervals $\I_\theta=\{I\}$, each of which has length $2^{-j}$ for some integer $j>|\log_2\en|$. We define $\mathbb I_{\theta,j}:=\{I\in\mathbb I_\theta: r(I)=2^{-j}\}$ (Here $r(I)$ denotes the length of $I$).
Lemma \ref{usefullemma} yields the following properties:
\begin{equation}\label{rsless21}
    \sum_{I\in\mathbb I_\theta}r(I)^s\le 1;
\end{equation}
For each $j$ and $r$-interval $I_r\subset l_\theta$, we have
\begin{equation}\label{structure2}
    \#\{I\in \I_{\theta,j}: I\subset I_r\}\lesssim (\frac{r}{2^{-j}})^s.
\end{equation}

For each $\theta\in E_s$, we can find such a $\I_\theta$. We also define the slab sets $\S_{\theta,j}:=\{\rho^{-1}_\theta(I): I\in\I_{\theta,j}\}\cap B^3(0,1)$, $\S_{\theta}:=\bigcup_j\S_{\theta,j}$. Each slab in $\S_{\theta,j}$ has dimensions $2^{-j}\times 1\times 1$ and normal direction $\gamma(\theta)$. One easily sees that $A\subset \bigcup_{S\in \S_\theta}S$.
By pigeonholing, there exists $j(\theta)$ such that

\begin{equation}\label{pigeon1}
    \nu_A\big(A\cap(\cup_{S\in\S_{\theta,j(\theta)}}S)\big)\ge \frac{1}{10j(\theta)^2}\nu_A(A)=\frac{1}{10j(\theta)^2}.
\end{equation}
For each $j>|\log_2\en|$, define $E_{s,j}:=\{\theta\in E_s: j(\theta)=j\}$. Then we obtain a partition of $E_s$:
$$ E_s=\bigsqcup_j E_{s,j}. $$
By pigeonholing again, there exists $j$ such that
\begin{equation}\label{pigeon2}
    \cH_\infty^t(E_{s,j})\ge \frac{1}{10j^2}\cH_\infty^t(E_s)\sim \frac{1}{10j^2}. 
\end{equation} 
In the rest of the poof, we fix this $j$. We also set $\de=2^{-j}$. By Lemma \ref{lemdelta}, there exists a $(\de,t)$-set $\Theta\subset E_{s,j}$ with cardinality $\#\Theta\gtrsim (\log\de^{-1})^{-2}\de^{-t}$.

Next, we consider the set $U:=\{(x,\theta)\in A\times \Theta: x\in\cup_{S\in\S_{\theta,j}}S \}$. We also use $\mu$ to denote the counting measure on $\Theta$ (note that $\Theta$ is a finite set).
Define the section of $U$:
$$ U_x=\{\theta: (x,\theta)\in U\},\ \ \  U_\theta:=\{x: (x,\theta)\in U\}. $$
By \eqref{pigeon1} and Fubini, we have
\begin{equation}\label{pigeon3}
    (\nu_A\times \mu)(U)\ge \frac{1}{10j^2} \mu(\Theta).
\end{equation}
This implies
\begin{equation}\label{pigeon4}
    (\nu_A\times \mu)\bigg(\Big\{(x,\theta)\in U: \mu(U_x)\ge\frac{1}{20j^2}\mu(\Theta)  \Big\}\bigg)\ge \frac{1}{20j^2} \mu(\Theta),
\end{equation} 
since
\begin{equation}
    (\nu_A\times \mu)\bigg(\Big\{(x,\theta)\in U: \mu(U_x)\le\frac{1}{20j^2}\mu(\Theta)  \Big\}\bigg)\le \frac{1}{20j^2} \mu(\Theta).
\end{equation} 
By \eqref{pigeon4}, we have
\begin{equation}\label{pigeon5}
    \nu_A\bigg(\Big\{x\in A: \mu(U_x)\ge \frac{1}{20j^2}\mu(\Theta) \Big\}\bigg)\ge \frac{1}{20j^2}. 
\end{equation} 

We are ready to apply Theorem \ref{linediscrt}. Recall $\de=2^{-j}$ and $\#\Theta\gtrsim (\log\de^{-1})^{-2}\de^{-t}$. By \eqref{pigeon5} and noting that $\nu_A(B_\de)\lesssim \de^{a}$, we can find a $\de$-separated subset of $\{x\in A: \# U_x\ge \frac{1}{20j^2}\#\Theta \}$ with cardinality $\gtrsim (\log\de^{-1})^{-2}\de^{-a}$. We denote the $\de$-neighborhood of this set by $H$, which is a union of $\de$-balls. For each $\de$-ball $B_\de$ contained in $H$, we see that there are $\gtrsim (\log\de^{-1})^{-2}\#\Theta$ many slabs from $\cup_{\theta\in\Theta}\S_{\theta,j}$ that intersect $B_\de$. We can now apply Theorem \ref{linediscrt} to obtain
$$ (\log\de^{-1})^{-8}\de^{-a-4t}\lesssim(\#\Theta)^4\#H\le C_{s,\e}\de^{-2t-s-2-\e}. $$
Letting $\en\rightarrow 0$ (and hence $\de\rightarrow 0$) and then $\e\rightarrow 0$, we obtain $a\le 2+s-2t$.
\end{proof}

\subsection{Proof of Theorem \ref{linediscrt}}
For convenience, we will prove the following version of Theorem \ref{linediscrt} after rescaling $x\mapsto \de^{-1}x$.
\begin{theorem}\label{linerescale} Fix $0<s<1$. For each $\e>0$, there exists $C_{s,\e}$ so that the following holds. Let $\d>0$. Let $H\subset B^3(0,\de^{-1})$ be a union of $\de^{-a}$ many disjoint unit balls so that $H$ has measure $|H|\sim \de^{-a}$. Let $\Theta$ be a $\de$-separated subset of $[0,1]$ so that $\Theta$ is a $(\de,t)$-set and $\#\Theta\gtrsim (\log\de^{-1})^{-2}\de^{-t}$. Assume for each $\theta\in \Theta$, we have a collection of $1\times \de^{-1}\times \de^{-1}$-slabs $\S_\theta$ with normal direction $\ga(\theta)$. $\S_\theta$ satisfies the $s$-dimensional condition: 
\begin{enumerate}
    \item $\#\S_\theta\lesssim \de^{-s}$, 
    \item $\#\{S\in\S_\theta:S\cap B_r\}\lesssim r^{s}$, for any $B_r$ being a ball of radius $r$ $(1\le r\le \de^{-1})$.
\end{enumerate}
We also assume that each unit ball contained in $H$ intersects $\gtrsim |\log\de^{-1}|^{-2}\#\Theta$ many slabs from $\cup_\theta \T_\theta$.
Then 
\[ (\#\Theta)^4\#H\le C_{s,\e} \de^{-2t-s-2-\e}.\] 
\end{theorem}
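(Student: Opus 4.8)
The plan is to deduce the bound from an $L^4$ estimate on one superlevel set. Put $S_\theta^*:=\bigcup_{S\in\S_\theta}S$ and $g:=\sum_{\theta\in\Theta}\mathbf 1_{S_\theta^*}$. Hypothesis (2) with $r=1$ shows that a unit ball meets $O(1)$ of the plates of any fixed $\S_\theta$, so $g_\theta:=\mathbf 1_{S_\theta^*}$ is essentially $\{0,1\}$-valued, $\|g_\theta\|_{L^4(\R^3)}^4=|S_\theta^*|\le\#\S_\theta\cdot\delta^{-2}\lesssim\delta^{-s-2}$ by hypothesis (1), and the assumption that each unit ball of $H$ meets $\gtrsim(\log\delta^{-1})^{-2}\#\Theta$ slabs of $\bigcup_\theta\S_\theta$ says precisely that $g\gtrsim(\log\delta^{-1})^{-2}\#\Theta$ on $H$. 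Hence the theorem follows once we produce a high-frequency modification $g_{\mathrm{hi}}$ of $g$ satisfying $g_{\mathrm{hi}}\gtrsim(\log\delta^{-1})^{-2}\#\Theta$ on $H$ and $\|g_{\mathrm{hi}}\|_{L^4(\R^3)}^4\lesssim_\e\delta^{-2t-s-2-\e}$, since then $\#H\le|\{|g_{\mathrm{hi}}|\gtrsim(\log\delta^{-1})^{-2}\#\Theta\}|\lesssim(\log\delta^{-1})^{8}(\#\Theta)^{-4}\|g_{\mathrm{hi}}\|_{L^4}^4$ by Chebyshev, and one absorbs the logarithm into $\delta^{-\e}$.

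To build $g_{\mathrm{hi}}$ and bound it, I would use a high--low decomposition together with decoupling for the cone generated by $\ga$. Each $g_\theta\approx\mathbf 1_{S_\theta^*}$ has Fourier transform concentrated on the $\delta$-tube $T_\theta=\{\xi:|\xi\cdot\ga(\theta)|\lesssim1,\ \mathrm{dist}(\xi,\R\ga(\theta))\lesssim\delta\}$ through the origin, and since these tubes all overlap near the origin (the cone degenerates at its vertex) one cannot decouple $g$ itself. So split $g=g_{\mathrm{lo}}+g_{\mathrm{hi}}$ by a smooth Fourier cutoff at scale $\delta^{-1/2}$. On $H$ the value $g_{\mathrm{lo}}(x)$ is controlled by the average of $g$ over a $\delta^{-1/2}$-ball, which by hypothesis (2) with $r=\delta^{-1/2}$ (so $|S_\theta^*\cap B_{\delta^{-1/2}}|\lesssim\delta^{-s/2}\cdot\delta^{-1}$) is $\lesssim\#\Theta\cdot\delta^{(1-s)/2}$; as $s<1$ this is $\ll(\log\delta^{-1})^{-2}\#\Theta$ once $\delta$ is small, whence $g_{\mathrm{hi}}\gtrsim(\log\delta^{-1})^{-2}\#\Theta$ on $H$. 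Now $\widehat{g_{\mathrm{hi}}}$ is supported in $\bigcup_{\theta\in\Theta}\big(T_\theta\cap\{|\xi|\gtrsim\delta^{1/2}\}\big)$, i.e. in the $\delta$-neighbourhood of the truncated cone $\{r\ga(\theta):\theta\in[0,1],\ \delta^{1/2}\lesssim|r|\lesssim1\}$, which carries non-vanishing curvature because $\ga$ is non-degenerate. Decomposing $g_{\mathrm{hi}}=\sum_{\theta\in\Theta}g_{\theta,\mathrm{hi}}$ into the corresponding $\delta$-caps $\tau_\theta$ and invoking small-cap decoupling for this general cone at the exponent $p=4$, in the form adapted to the $(\delta,t)$-set $\Theta$, gives $\|g_{\mathrm{hi}}\|_{L^4(\R^3)}^4\lesssim_\e\delta^{-\e}\big(\sum_{\theta\in\Theta}\|g_{\theta,\mathrm{hi}}\|_{L^4}^2\big)^2$; with $\|g_{\theta,\mathrm{hi}}\|_{L^4}\lesssim\|g_\theta\|_{L^4}\lesssim\delta^{-(s+2)/4}$ and $\#\Theta\lesssim\delta^{-t}$ this is $\lesssim_\e\delta^{-\e}(\#\Theta)^2\delta^{-s-2}\lesssim_\e\delta^{-2t-s-2-\e}$, completing the proof.

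The main obstacle is the decoupling input. The bare $\ell^2L^4$ small-cap decoupling into $\delta$-planks on the cone is false — a collection of plates filling space makes $\|g_{\mathrm{hi}}\|_{L^4}$ large while the decoupled side stays small — so all of the hypotheses must be used inside the decoupling step: the $(\delta,t)$-set structure of the directions $\Theta$, the ball condition (2) on the slabs, and the curvature of the cone over $\ga$, which enters via the fact that plates with $\rho$-separated normals $\ga(\theta_1),\ga(\theta_2),\ga(\theta_3)$ can overlap only in a set of volume $\lesssim|\det(\ga(\theta_1),\ga(\theta_2),\ga(\theta_3))|^{-1}$. Arranging for these to combine so that the decoupled side is exactly $(\#\Theta)^2\delta^{-s-2}$, with only a $\delta^{-\e}$ loss, is the crux; I expect it to be handled either by a dedicated fractal small-cap decoupling theorem for the general cone or, equivalently, by expanding $\|g_{\mathrm{hi}}\|_{L^4}^4$ into four-fold plate intersections, for which the terms involving at most two distinct $\theta$'s already furnish the main term $(\#\Theta)^2\max_\theta|S_\theta^*|$ and the genuinely multilinear terms are controlled by the determinant bound after a dyadic decomposition over the angular separations. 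A secondary point is checking that the high--low split performs as claimed on $H$, which follows from hypothesis (2) and $s<1$; the whole argument is numerically tight, which is the source of the logarithmic and $\e$-losses.
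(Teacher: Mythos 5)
Your overall architecture coincides with the paper's: replace the slabs by Fourier--localized bump functions, observe that the resulting $f$ is $\gtrsim(\log\de^{-1})^{-2}\#\Theta$ on $H$, kill the low--frequency part near the cone's vertex using hypothesis (2) together with $s<1$, and finish by Chebyshev plus a decoupling for the cone over $\ga$ adapted to the $(\de,t)$-structure of $\Theta$. (Two minor points: the paper uses smooth bumps $\psi_{S_\theta}$ with $\supp\wh{\psi}_{S_\theta}$ in the dual tube rather than indicators, and it truncates the vertex only at the logarithmic scale $K^{-1}$, $K\sim(\log\de^{-1})^{2/(1-s)}$, precisely so that the truncated cone $\Ga_{K^{-1}}$ costs only a harmless $K^{O(1)}$ in the decoupling step (Remark \ref{closeconerm}); your cutoff at frequency $\de^{1/2}$ leaves dyadic shells near the vertex where the $\de$-caps are no longer of the canonical ``width $=$ thickness'' type after rescaling and where the $(\de,t)$-spacing degrades, so the decoupling loss there is not negligible. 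This is fixable by simply taking the logarithmic cutoff, which your own low-part bound $\#\Theta\,\de^{(1-s)/2}$ permits with room to spare.)

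The genuine gap is the decoupling input itself, which you correctly identify as the crux but do not prove. You invoke an $\ell^2L^4$ small-cap decoupling with constant $\de^{-\e}$ ``adapted to the $(\de,t)$-set $\Theta$''; no such statement is available off the shelf, and as stated it is strictly stronger than what the paper proves (by Cauchy--Schwarz over the $\lesssim\de^{-t}$ caps it implies the paper's inequality). Its unconditional version is false: packing the $\de\times\de\times1$ caps into a single canonical $\de\times\de^{1/2}\times1$ plank reduces it to flat decoupling, which costs a power of $\de$, so the $t$-spacing must enter the proof of the decoupling, not merely its application, and even the naive ``canonical decoupling plus flat decoupling inside canonical planks'' route gives a power loss like $\de^{-t/8}$ rather than $\de^{-\e}$. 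What the paper actually proves and uses is Theorem \ref{fractaldec}: the $\ell^4$ inequality $\int_{B_{\de^{-1}}}|g|^4\lesssim_\e\de^{-t-\e}\sum_\ga\int|g_\ga|^4$ for $t$-spaced caps, which suffices for the same numerology ($\de^{-t-\e}\cdot\#\Theta\cdot\de^{-s-2}\lesssim\de^{-2t-s-2-\e}$), and whose proof is the real content: it runs the Guth--Wang--Zhang wave envelope estimate (Theorem \ref{GWZ}, extended to general non-degenerate cones) and exploits the $t$-spacing at every intermediate scale $s\in[\de^{1/2},1]$ to bound $\|S_Ug\|_2^2\lesssim\de^{-t}|U|$ after pigeonholing to unit-amplitude wave packets. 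Your proposed substitute---expanding $\|g_{\mathrm{hi}}\|_4^4$ into four-fold plate intersections and controlling the multilinear terms by the determinant/transversality volume bound---does not obviously close: the dangerous quadruples are the near--degenerate ones at all intermediate angular scales, and controlling them is exactly what the wave envelope (square-function-type) estimates for the cone accomplish; a pointwise triple-overlap bound after a dyadic decomposition in angle is far from a proof. So the reduction is sound, but the theorem you need is asserted rather than established, and in a form whose validity is itself unclear.
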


We define the cone
\begin{equation}\label{defcone}
    \Ga:=\{r\ga(\theta):1/2\le r\le 1,\theta\in[0,1]\}. 
\end{equation} 
For any large scale $R$, 
there is a standard partition of $N_{R^{-1}}\Ga$ into planks $\si_{R^{-1/2}}$ of dimensions $R^{-1}\times R^{-1/2}\times 1$: $$N_{R^{-1}}\Ga=\bigcup \si_{R^{-1/2}}.$$
Here, the subscript of $\si_{R^{-1/2}}$ denotes its angular size.
For any function $f$ and plank $\si=\s_{R^{-1/2}}$, we define $f_\si:=(1_\si\wh f)^\vee$ as usual. The main tool we need is the following \textit{fractal small cap decoupling} for the cone $\Ga$.

\begin{theorem}[fractal small cap decoupling]\label{fractaldec}
Suppose $N_\de(\Ga)=\bigcup \ga$, where each $\ga$ is a $\de\times\de\times 1$-cap. 
Given a function $g$, we say $g$ is $t${\bf -spacing} if $\supp\wh g\subset \cup_{\ga\in\Ga_g}\ga$, where $\Ga_g$ is a set of $\de\times\de\times 1$-caps from the partition of $N_\de(\Ga)$ and satisfies:
\begin{equation}\label{tspacing}
    \#\{\ga\in\Ga_g: \ga\subset \si_r\}\lesssim (r/\de)^t, \textup{~for~any~}r^2\times r\times 1-\textup{plank~} \si_r\subset N_{r^2}\Ga\ \  (\de\le r\le 1). 
\end{equation} 
If $g$ is $t$-spacing, then we have
\begin{equation}\label{fractaldecineq}
    \int_{B_{\de^{-1}}} |g|^{4}\lesssim_\e \de^{-\e-t}\sum_\ga \int |g_\ga|^{4}.
\end{equation}
\end{theorem}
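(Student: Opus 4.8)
The plan is to combine Bourgain--Demeter $\ell^2$-decoupling for the cone $\Ga$, applied at a whole range of intermediate angular scales, with the fractal hypothesis \eqref{tspacing}, fed in one scale at a time; the natural framework is an induction on $\log\de^{-1}$.

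\emph{Preliminary reductions.} First I would fix a finite chain of intermediate scales $\de=\rho_m<\rho_{m-1}<\dots<\rho_0=1$ and, by dyadic pigeonholing (in the range of $\|g_\ga\|_{L^4(B_{\de^{-1}})}$, and in the branching of $\Ga_g$ across this chain), reduce to the case where $\Ga_g$ is \emph{uniform}: there are integers $\mu_1,\dots,\mu_m$ so that the caps of $\Ga_g$ lying in any one angular-$\rho_{k-1}$ window occupy exactly $\mu_k$ of the angular-$\rho_k$ subwindows, and $\|g_\ga\|_{L^4}$ is comparable for all $\ga\in\Ga_g$. These reductions cost only powers of $\log\de^{-1}$, which get absorbed into $\de^{-\e}$. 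The spacing condition \eqref{tspacing} then reads $\prod_{j>k}\mu_j\lesssim(\rho_k/\de)^t$ for every $k$; in particular $\#\Ga_g\lesssim\de^{-t}$.

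\emph{The iteration.} Working one scale at a time, consider an angular-$\rho_{k-1}$ window $\si$ and its angular-$\rho_k$ subwindows $\tau\subset\si$. After rescaling $\si$ to the unit cube, $g_\si$ is Fourier supported in a thin neighbourhood of a rescaled cone, the $\tau$'s become its canonical planks, and $\ell^2 L^4$ cone decoupling gives $\int|g_\si|^4\lesssim_\e\de^{-\e}\big(\sum_{\tau\subset\si}\|g_\tau\|_{L^4}^2\big)^2$; below the canonical scale $\de^{1/2}$ there is no curvature and this step is replaced by flat decoupling of a $(\de,t)$-set of slabs. The key move is to turn each such $\ell^2$-sum over the $\tau\subset\si$ into an $\ell^4$-sum by Cauchy--Schwarz, paying only the number $\mu_k$ of \emph{occupied} subwindows rather than all of them. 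Iterating down the chain $\rho_0,\dots,\rho_m$ and recombining with the uniformity structure leaves a bound of the shape
\[
\int_{B_{\de^{-1}}}|g|^4\ \lesssim_\e\ \de^{-O(\e)}\Big(\textstyle\prod_{k=1}^m\mu_k^{a_k}\Big)\sum_\ga\int|g_\ga|^4
\]
for exponents $a_k$ determined by how the $\ell^2$--$\ell^4$ conversions compose, and the task is to check that the spacing bounds $\prod_{j>k}\mu_j\lesssim(\rho_k/\de)^t$ force $\prod_k\mu_k^{a_k}\lesssim\de^{-t}$.

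\emph{The main obstacle.} Everything hinges on that last exponent bookkeeping, and it is genuinely delicate: a single crude pass --- canonical cone decoupling to $\de^{1/2}$-planks followed by flat decoupling into $\de$-caps, or equivalently converting an $\ell^2$-decoupling to $\ell^4$ only at the very end using $\#\Ga_g\lesssim\de^{-t}$ --- produces only the strictly weaker bound $\de^{-3t/2}$. To reach $\de^{-t}$ one must use the sparsity of the occupied windows at \emph{every} scale, so that the curvature of $\Ga$ is exploited across the whole range $\de\le\rho\le1$ and the per-scale losses telescope. One must also check that the uniformisation and, crucially, the rescaled form of \eqref{tspacing} behave correctly under the parabolic rescalings, and arrange the $\e$-loss so it accumulates only to $\de^{-O(\e)}$ over the $O(1)$ scales in the chain (using the sharp $\de^{-\e}$ form of Bourgain--Demeter at each step rather than iterating it scale by scale). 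The rest is routine.
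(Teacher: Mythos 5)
Your outline leaves precisely the step on which the theorem lives --- the exponent bookkeeping --- unproven, and as sketched the mechanism cannot close. If you apply $\ell^2$ cone decoupling scale by scale and convert $\ell^2$ to $\ell^4$ by Cauchy--Schwarz against the number $\mu_k$ of occupied subwindows, the per-scale losses multiply, and the product of the $\mu_k$ over all angular scales above $\de^{1/2}$ is nothing but the total number $M$ of occupied canonical $\de\times\de^{1/2}\times 1$ planks; below $\de^{1/2}$ only flat decoupling is available, costing $N_\si^2$ with $N_\si=\#\{\ga\subset\si\}$. So the multiscale version collapses back to the one-pass bound $M\cdot\max_\si N_\si^2$, which under the constraints $M\cdot N\lesssim\de^{-t}$, $N\lesssim\de^{-t/2}$ is as large as $\de^{-3t/2}$ --- exactly the loss you concede for the "crude pass." Moreover the extremal configuration ($M\sim\de^{-t/2}$ occupied planks, each containing $N\sim\de^{-t/2}$ caps spread in arithmetic-progression fashion along the plank) is a perfectly legitimate $t$-spacing set, and for it the flat-decoupling loss $N^2$ inside each plank is genuinely attained; any improvement must therefore come from how the wave packets from \emph{different} caps distribute in physical space, information that is invisible to a bookkeeping that only tracks $\|g_\tau\|_{L^4}$ of the pieces and occupancy counts. "Using the sparsity at every scale" within a Bourgain--Demeter + Cauchy--Schwarz iteration does not supply that input, so the assertion that the spacing bounds force $\prod_k\mu_k^{a_k}\lesssim\de^{-t}$ is not just unverified --- for the exponents your scheme produces, it is false.

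The paper closes this gap with a strictly stronger tool: the Guth--Wang--Zhang wave envelope estimate (Theorem \ref{GWZ}, extended to general cones via the appendix of \cite{ampdep}), which is square-function-level information rather than $\ell^2$ decoupling. After pigeonholing wave packet amplitudes so that $\int|g_\ga|^4\sim\int|g_\ga|^2$, one bounds each $\|S_Ug\|_2^2$ by local $L^2$ orthogonality at the dual scale and then applies the $t$-spacing hypothesis \eqref{tspacing} at the \emph{two dual scales} $s$ and $\de s^{-1}$: the count of caps in a $\tau_s$ is $\lesssim(s/\de)^t$ and the count in a $\beta$-plank of angular width $\de s^{-1}$ is $\lesssim s^{-t}$, whose product is $\de^{-t}$ for every $s$. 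This yields $\|S_Ug\|_2^2\lesssim\de^{-t}|U|$, hence $\sum_U|U|^{-1}\|S_Ug\|_2^4\lesssim\de^{-t}\sum_\ga\int|g_\ga|^2$, and the constant-amplitude reduction converts the right side to $\sum_\ga\int|g_\ga|^4$. If you want to salvage your approach, you would need to import some substitute for this spatial localization (e.g. the wave envelope or a high-low/incidence argument at the wave packet level); decoupling plus occupancy counting alone stops at $\de^{-3t/2}$.
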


Small cap decoupling for the cone was studied by the second and third authors in \cite{ampdep}, where they proved amplitude-dependent versions of the wave envelope estimates (Theorem 1.3) of \cite{guth2020sharp}. Wave envelope estimates are a more refined version of square function estimates, and sharp small cap decoupling is a straightforward corollary. For certain choices of conical small caps, the critical $L^{p_c}$ exponent is $p_c=4$ (as is the case in our Theorem \ref{fractaldec}). When $p_c=4$, the sharp small cap decoupling inequalities follow already from the wave envelope estimates of \cite{guth2020sharp}. A version of this was first observed in Theorem 3.6 of \cite{demeter2020small} and was later thoroughly explained in \textsection10 of \cite{ampdep}. To prove Theorem \ref{fractaldec} above, we repeat the derivation of small cap decoupling from the wave envelope estimates of \cite{guth2020sharp} but incorporate the extra ingredient of $t$-spacing.

\begin{remark}\label{closeconerm}
{\rm
We will actually apply Theorem \ref{fractaldec} to a slightly different cone \begin{equation}\label{defclosecone}
    \Ga_{K^{-1}}=\{r\ga(\theta):K^{-1}\le r\le 1, \theta\in[0,1]\}. 
\end{equation} 
Compared with $\Ga$, we see that $\Ga_{K^{-1}}$ is at distance $K^{-1}$ from the origin, but we still have a similar fractal small cap decoupling for $\Ga_{K^{-1}}$. Instead of \eqref{fractaldecineq}, we have
\begin{equation}\label{closeconedec}
    \int_{B_{\de^{-1}}} |g|^{4}\lesssim_\e K^{O(1)}\de^{-\e-t}\sum_\ga \int |g_\ga|^{4}.
\end{equation}
The idea is to partition $\Ga_{K^{-1}}$ into $\sim K$ many parts, each of which is roughly a cone that we can apply Theorem \ref{fractaldec} to. By triangle inequality, it gives \eqref{closeconedec} with an additional factor $K^{O(1)}$.
It turns out that this factor is not harmful, since we will set $K\sim (\log\de^{-1})^{O(1)}$ which can be absorbed into $\de^{-\e}$.
}
\end{remark}

We postpone the proof of Theorem \ref{fractaldec} to the next subsection, and first show how it implies Theorem \ref{linerescale}.

\begin{proof}[Proof of Theorem \ref{linerescale} assuming Theorem \ref{fractaldec}]
We consider the dual of each $S_\theta\in\S_\theta$ in the frequency space.
For each $\theta\in\Theta$, we define $\tau_\theta$ to be a tube centered at the origin that has dimensions $\de\times\de\times 1$, and its direction is $\ga(\theta)$. We see that $\tau_\theta$ is the dual of each $S_\theta\in\S_\theta$. Now, for each $S_\theta\in\S_\theta$, we choose a bump function $\psi_{S_\theta}$ satisfying the following properties: $\psi_{S_\theta}\ge 1$ on $S_\theta$, $\psi_{S_\theta}$ decays rapidly outside $S_\theta$, and $\supp \wh\psi_{S_\theta}\subset \tau_\theta$.

Define functions 
\[ f_\theta=\sum_{S_\theta\in\S_\theta}\s_{S_\theta}\qquad\text{and}\qquad f=\sum_{\theta\in \Theta}f_\theta. \]
From our definitions, we see that for any $x\in H$, we have $f(x)\gtrsim (\log\de^{-1})^{-2}\#\Theta$. Therefore, we obtain
\begin{equation}\label{line1}
    |H|(\#\Theta)^4\lessapprox\int_{H}|f|^4,
\end{equation}
where ``$\lessapprox$" means ``$\lesssim (\log\de^{-1})^{O(1)}$". 

Next, we will do a high-low decomposition for each $\tau_\theta$.
\begin{definition}
Let $K$ be a large number which we will choose later. Define the high part of $\tau_\theta$ as
$$ \tau_{\theta,high}:=\{\xi\in\tau_\theta: K^{-1}\le|\xi\cdot\ga(\theta)|\le 1\}. $$
Define the low part of $\tau_\theta$ as 
$$ \tau_{\theta,low}:=\tau_\theta\setminus \tau_{\theta,high}= \{\xi\in\tau_\theta:|\xi\cdot\ga(\theta)|\le K^{-1}\}.$$
\end{definition}

We choose a smooth partition of unity adapted to the covering $\tau_\theta=\tau_{\theta,high}\bigcup\tau_{\theta,low}$ which we denote by $\eta_{\theta,high}, \eta_{\theta,low}$, so that
$$ \eta_{\theta,high}+\eta_{\theta,low}=1 $$
on $\tau_{\theta}$. The key observation is that $\{\supp \wh \eta_{\theta,high}\}_\theta$ are at most $O(K)$-overlapping and form a canonical covering of $N_\de (\Ga_{K^{-1}})$. (See the definition of $\Ga_{K^{-1}}$ in \eqref{defclosecone}).

Since $\supp\wh f_\theta\subset \tau_\theta$, we also obtain a decomposition of $f_\theta$
\begin{equation}
    f_\theta=f_{\theta,high}+f_{\theta,low},
\end{equation}
where $\wh f_{\theta,high}=\eta_{\theta,high} \wh f_\theta, \wh f_{\theta,low}=\eta_{\theta,low}\wh f_{\theta}.$
Similarly, we have a decomposition of $f$
\begin{equation}
    f=f_{high}+f_{low},
\end{equation}
where $f_{high}=\sum_{\theta}f_{\theta,high},  f_{low}=\sum_\theta f_{\theta,low}.$

Recall that for $x\in H$, we have
$$(\log\de^{-1})^{-2}\#\Theta\lesssim f(x)\le |f_{high}(x)|+|f_{low}(x)|. $$
We will show that by properly choosing $K$, we have  
\begin{equation}\label{pointwise}
    |f_{low}(x)|\le C^{-1}(\log\de^{-1})^{-2}\#\Theta.
\end{equation}
Recall that $f_{low}=\sum_\theta f_\theta*\eta^\vee_{\theta,low}$. Since $\eta_{\theta,low}$ is a bump function at $\tau_{\theta,low}$, we see that $\eta_{\theta,low}^\vee$ is an $L^1$-normalized bump function essentially supported in the dual of $\tau_{\theta,low}$. Denote the dual of $\tau_{\theta,low}$ by $S_{\theta,K}$ which is a $\de^{-1}\times\de^{-1}\times K$-slab whose normal direction is $\ga(\theta)$. One actually has
$$ |\eta^\vee_{\theta,low}|\lesssim \frac{1}{|S_{\theta,K}|}\psi_{S_{\theta,K}}. $$
Here, $\psi_{S_{\theta,K}}$ is bump function $=1$ on $S_{\theta,K}$ and decays rapidly outside $S_{\theta,K}$.
Ignoring the rapidly decaying tails, we have
$$ |f_{low}(x)|\lesssim \sum_\theta \frac{1}{K}\#\{ S_\theta\in\S_\theta:S_\theta\cap B_{100K}(x)\neq \emptyset \}. $$
Recalling the condition (2) in Theorem \ref{linerescale}, we have
$$ \#\{S_\theta\in\S_\theta: S_\theta\cap B_{100K}(x)\neq\emptyset\}\lesssim (100K)^s. $$
This implies
$$ |f_{low}(x)|\lesssim K^{s-1}\#\Theta. $$
Since $s<1$, by choosing $K\sim (\log\de^{-1})^{\frac{2}{1-s}}$, we obtain \eqref{pointwise}. This shows that for $s\in H$, we have
$$ (\log\de^{-1})^{-2}\#\Theta\lesssim |f(x)|\lesssim |f_{high}|. $$

We define $g=f_{high}$. By remark \eqref{closeconerm}, we actually see that $\{\tau_{\theta,high}\}$ form a $K$-overlapping covering of $N_\de(\Ga_K)$, 
and we have the decoupling inequality \eqref{closeconedec}.
By \eqref{line1}, we have
$$ |H|\de^{-4t}\lesssim \int_H |f|^4\lesssim \int_{B_{\de^{-1}}}|f_{high}|^4. $$
By \eqref{closeconedec}, it is further bounded by
$$ \lesssim_\e \de^{-t-\e}\sum_{\theta} \int |f_{\theta,high}|^4\lesssim \de^{-t-\e}\sum_\theta \int |\sum_{S_\theta\in\S_\theta} \psi_{S_\theta}|^4. $$
Since the slabs in $\S_\theta$ are essentially disjoint, the above expression is bounded by
$$\lesssim \de^{-t-\e}\sum_\theta\int\sum_{S_\theta\in\S_\theta}|\psi_{S_\theta}|^4\sim \de^{-t-\e}\sum_\theta\sum_{S_\theta\in\S_\theta}|S_\theta|\sim \de^{-t-\e} \de^{-s-t-1}.$$
This implies $(\#\Theta)^4\# H\lesssim_\e \de^{2t-s-1-\e}$.

\end{proof}

\subsection{Proof of Theorem \ref{fractaldec}}
The proof of Theorem \ref{fractaldec} is based on an inequality of Guth, Wang and Zhang. Let us first introduce some notation from their paper \cite{guth2020sharp}.
Let $\Gn$ denote the standard cone in $\R^3$:
$$\Gn:=\{(r\cos\theta,r\sin\theta,r):1/2\le r\le 1,\theta\in[0,2\pi]\}.$$
We can partition the $\de$-neighborhood of $\Gn$ into $\de\times \de^{1/2}\times 1$-planks $\Si=\{\si\}$: $$N_{\de}(\Gn)=\bigsqcup \si.$$
More generally, for any dyadic $s$ in the range $\de^{1/2}\le s\le 1$, we can partition the $s^2$-neighborhood of $\Gn$ into $s^2\times s\times 1$-planks $\Tau_s=\{\tau_s\}$: $$N_{s^2}(\Gn)=\bigsqcup \tau_s.$$
For each $s$ and frequency plank $\tau_s\in \Tau_s$, we define the box $U_{\tau_s}$ in the physical space to be a rectangle centered at the origin of dimensions $\de^{-1}\times \de^{-1}s\times \de^{-1}s^2$ whose edge of length $\de^{-1}$ (respectively $\de^{-1}s$, $\de^{-1}s^2$) is parallel to the edge of $\tau_s$ with length $s^2$ (respectively $s$, $1$). Note that for any $\si\in\Si$, $U_\si$ is just the dual rectangle of $\si$. Also, $U_{\tau_s}$ is the convex hull of $\cup_{\si\subset \tau_s}U_{\si}$.

If $U$ is a translated copy of $U_{\tau_s}$, then we define $S_U f$ by
\begin{equation}
    S_U f=\big(\sum_{\si\subset \tau_s}|f_\si|^2\big)^{1/2}\Id_U.
\end{equation}
We can think of $S_U f$ as the wave envelope of $f$ localized in $U$ in the physical space and localized in $\tau_s$ in the frequency space.
We have the following inequality of Guth, Wang and Zhang (see \cite{guth2020sharp} Theorem 1.5):
\begin{theorem}[Wave envelope estimate]\label{GWZstandard}
Suppose $\supp\wh f\subset N_{\de}(\Gn)$. Then
\begin{equation}
    \|f\|_{4}^4\le C_\e \de^{-\e} \sum_{\de^{1/2}\le s\le 1}\sum_{\tau_s\in\Tau_s}\sum_{U\parallel U_{\tau_s}} |U|^{-1}\|S_Uf\|_2^4.
\end{equation}
\end{theorem}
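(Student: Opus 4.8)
We outline the proof of Theorem~\ref{GWZstandard}, which is the high--low argument of Guth, Wang and Zhang. It is an induction on the scale $\de^{-1}$ whose engine is a high--low frequency decomposition of the square function $\sum_\sigma|f_\sigma|^2$; the multi-scale shape of the right-hand side is precisely what allows the induction to close with only a $\de^{-\e}$ loss.

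\textbf{Step 1: Plancherel and a telescoping identity.} After the usual localizations one reduces to estimating $\int_{\R^3}|f|^4=\||f|^2\|_2^2$ (Plancherel), with $\supp\wh f\subset N_\de(\Gn)$ and $f=\sum_\sigma f_\sigma$ over the finest planks $\sigma\in\Si$. For each dyadic $s\in[\de^{1/2},1]$ put $f_{\tau_s}=\sum_{\sigma\subset\tau_s}f_\sigma$ and $g_s=\sum_{\tau_s\in\Tau_s}|f_{\tau_s}|^2$, so that $g_{\de^{1/2}}=\sum_\sigma|f_\sigma|^2$ while $g_1=|f|^2$ (the single plank $\tau_1$ is the whole cone). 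Telescoping along the dyadic scales,
\begin{equation}\label{telescope}
  |f|^2=\sum_\sigma|f_\sigma|^2+\sum_{\de^{1/2}<s\le1}\big(g_s-g_{s/2}\big),\qquad g_s-g_{s/2}=\sum_{\tau_s}\ \sum_{\substack{\tau_{s/2},\tau'_{s/2}\subset\tau_s\\ \tau_{s/2}\neq\tau'_{s/2}}}f_{\tau_{s/2}}\overline{f_{\tau'_{s/2}}}.
\end{equation}

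\textbf{Step 2: cone geometry -- organizing the scales.} The crucial input concerns the interaction of the planks: the cross terms making up $g_s-g_{s/2}$ are, after the appropriate frequency truncation, organized with bounded multiplicity by the planks $\tau_s$ containing the two interacting sub-planks, and the resulting single-$\tau_s$ pieces have $O(1)$-overlapping Fourier supports and are essentially separated from the analogous pieces at the other dyadic scales. This is where the curvature of the light cone is used in an essential way. Granting it, almost-orthogonality gives, up to rapidly decaying errors,
\begin{equation}\label{orthog}
  \big\||f|^2\big\|_2^2\lesssim\Big\|\sum_\sigma|f_\sigma|^2\Big\|_2^2+\sum_{\de^{1/2}<s\le1}\ \sum_{\tau_s\in\Tau_s}\big\|(g_s-g_{s/2})_{\tau_s}\big\|_2^2,
\end{equation}
where $(g_s-g_{s/2})_{\tau_s}=|f_{\tau_s}|^2-\sum_{\tau_{s/2}\subset\tau_s}|f_{\tau_{s/2}}|^2$ is the single-plank summand.

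\textbf{Step 3: locally constant property and the inductive step.} The truncated summand $(g_s-g_{s/2})_{\tau_s}$ is controlled pointwise by local averages of $|f_{\tau_s}|^2$ over translates of the dual box $U_{\tau_s}$ (dimensions $\de^{-1}\times\de^{-1}s\times\de^{-1}s^2$), which are themselves larger than the set on which $|f_{\tau_s}|^2$ is locally constant; hence $\|(g_s-g_{s/2})_{\tau_s}\|_2^2\lesssim\sum_{U\parallel U_{\tau_s}}|U|^{-1}\big(\int_U|f_{\tau_s}|^2\big)^2\sim\int|f_{\tau_s}|^4$. One then parabolically rescales $\tau_s$ to the unit cone: $f_{\tau_s}$ becomes a function with frequency support in $N_{\de'}(\Gn)$, $\de'=\de s^{-2}<1$, which is a strictly coarser (hence simpler) scale, and the inductive hypothesis bounds $\int|f_{\tau_s}|^4$ by the rescaled wave-envelope sum. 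Undoing the rescaling turns this into a sub-sum of the right-hand side of Theorem~\ref{GWZstandard}, indexed by plank scales in $[\de^{1/2},s]$ and by dual boxes refining $U$. The leftover term $\|\sum_\sigma|f_\sigma|^2\|_2^2$ is, by the same locally constant argument with $U$ a $\de^{-1}$-ball, exactly the $s=1$ term of the right-hand side.

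\textbf{Step 4: summation and the main obstacle.} Summing \eqref{orthog} over the $O(\log\de^{-1})$ dyadic scales $s$ and over $\tau_s$, and absorbing the iterated $C_{\e'}(\de')^{-\e'}$ losses into $C_\e\de^{-\e}$ by taking $\e'$ small relative to $\e/\log\de^{-1}$, finishes the proof. The main obstacle is Step 2: establishing that the cross terms in $g_s-g_{s/2}$ are genuinely organized with bounded overlap by the planks $\tau_s$, with the dual object the box $U_{\tau_s}$ of the stated dimensions, and that the different dyadic scales are (after truncation) almost orthogonal on the frequency side. This is the only place curvature enters and it has no flat analogue; everything else -- Plancherel, the telescoping \eqref{telescope}, the locally constant property, parabolic rescaling and the scale bookkeeping -- is soft, the one non-obvious point being that the fully multi-scale form of the right-hand side is exactly what makes the induction close without an algebraic loss.
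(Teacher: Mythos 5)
First, note that the paper does not prove this statement at all: it is quoted as Theorem 1.5 of \cite{guth2020sharp} (with the general-cone version handled in the appendix of \cite{ampdep}), so the relevant ``proof'' in this paper is a citation. What you have written is an outline of the Guth--Wang--Zhang argument, and while it captures the correct high-level strategy (multi-scale square functions, high--low frequency analysis, locally constant heuristic, Lorentz rescaling and induction on scale), it has a genuine gap exactly where the theorem is hard. Your Step 2 is not proved but ``granted,'' and as literally stated it is false: in your telescoping identity the cross terms $f_{\tau_{s/2}}\overline{f_{\tau'_{s/2}}}$ include \emph{adjacent} pairs $\tau_{s/2}\neq\tau'_{s/2}$, for which the difference set $\tau_{s/2}-\tau'_{s/2}$ reaches all the way down to the origin; such pieces are not Fourier-separated from the coarser-scale pieces, so the claimed almost-orthogonality across scales and across $\tau_s$ does not hold without the quantitative high--low cutoff and the key geometric lemma of \cite{guth2020sharp} on the bounded overlap of the sets $\tau_s-\tau_s$ outside a low-frequency ball. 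That lemma, and the way the low-frequency parts are recycled into coarser-scale square functions rather than discarded, is the heart of the theorem; an outline that assumes it has not proved the theorem.

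There are also substantive problems in Steps 3--4 as written. The wave envelope is defined by $\|S_Uf\|_2^2=\int_U\sum_{\sigma\subset\tau_s}|f_\sigma|^2$, which is not $\int_U|f_{\tau_s}|^2$ (the latter contains all cross terms among the $\sigma$'s), so your locally-constant bound does not by itself produce terms of the stated right-hand side. Passing instead through $\int|f_{\tau_s}|^4$ and invoking the inductive hypothesis after rescaling requires verifying that the Lorentz rescaling of the sector $\tau_s$ carries the wave-envelope boxes of the rescaled cone to boxes comparable to the $U_{\tau_r}$, $\tau_r\subset\tau_s$, of the original cone; this rescaling compatibility is asserted, not checked, and it is a real computation (frames of sub-planks rotate within $\tau_s$, and getting boxes of the wrong dimensions would make the inductive bound dominate, rather than be dominated by, the stated right-hand side). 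Finally, the device of ``taking $\e'$ small relative to $\e/\log\de^{-1}$'' is not a legitimate way to close an induction on scale, since the constant $C_{\e'}$ must be uniform in $\de$; the standard closure uses a fixed $\e$ together with the gain $(\de/s^2)^{-\e}\le\de^{-\e}s^{2\e}$ from rescaling. In short: the skeleton matches Guth--Wang--Zhang, but the central orthogonality lemma, the correct treatment of adjacent planks and low frequencies, and the rescaling bookkeeping are missing, and these are precisely the content of the theorem; for the purposes of this paper the correct move is simply to cite \cite{guth2020sharp}.
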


Although the theorem above is stated for the standard cone $\Ga_\circ$, it is also true for general cone $\Ga$ (see \eqref{defcone}). The appendix of \cite{ampdep} shows how to adapt the inductive proof of Guth-Wang-Zhang for $\Ga_\circ$ to the case of a general cone $\Ga$.

As we did for $\Ga_\circ$, we can also define the $\de\times \de^{1/2}\times 1$-planks $\Si=\{\si\}$ and $s^2\times s\times 1$-planks $\Tau_s=\{\tau_s\}$, which form a partition of certain neighborhood of $\Ga$. We can similarly define the wave envelope $S_U f$ for $\supp\wh f\subset N_{\de}(\Ga)$. We have the following estimate for general cone.

\begin{theorem}[Wave envelope estimate for general cone]\label{GWZ}
Suppose $\supp\wh f\subset N_{\de}(\Ga)$. Then
\begin{equation}
    \|f\|_{4}^4\le C_\e \de^{-\e} \sum_{\de^{1/2}\le s\le 1}\sum_{\tau_s\in\Tau_s}\sum_{U\parallel U_{\tau_s}} |U|^{-1}\|S_Uf\|_2^4.
\end{equation}
\end{theorem}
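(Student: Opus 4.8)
The statement is the exact analogue of Theorem \ref{GWZstandard} with the standard cone $\Gn$ replaced by the general cone $\Ga$ of \eqref{defcone}, so the plan is to revisit the induction-on-scales proof of Guth--Wang--Zhang \cite{guth2020sharp} and check that every geometric ingredient it uses is an affine-invariant, local consequence of the non-degeneracy $\det(\ga,\ga',\ga'')\neq0$. The cleanest way to make the induction close is to prove the estimate uniformly over a quantitative class: fix $c,M>0$ and establish the bound, with $C_\e$ depending only on $c,M,\e$, for every $\ga\in C^3([0,1];\S^2)$ with $|\det(\ga,\ga',\ga'')|\ge c$ and $\|\ga\|_{C^3}\le M$. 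This uniformity is forced on us because, unlike the light cone, $\Ga$ is not self-similar: a rescaled plank of $\Ga$ is a neighborhood of the cone over a reparametrized \emph{sub-arc} of $\ga$, which is a different non-degenerate curve; but once the arc is recentered and dilated to unit length, a Taylor-expansion/compactness argument shows its non-degeneracy constant and $C^3$-norm are controlled by $c$, $M$ and $\|\ga\|_{C^3}$, so the iteration stays inside a fixed class and no loss accumulates.

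I would carry out the argument in three steps. First, set up for $\Ga$ the plank families $\Si=\{\si\}$ and $\Tau_s=\{\tau_s\}$ partitioning the appropriate neighborhoods, together with their dual boxes $U_{\tau_s}$, verbatim as for $\Gn$, and record the two structural facts used throughout \cite{guth2020sharp}: that for $\de^{1/2}\le s'\le s$ the boxes $U_{\tau_{s'}}$ essentially tile $U_{\tau_s}$, and that planks at a fixed scale are boundedly overlapping with the expected transversality; both follow from $|\det(\ga,\ga',\ga'')|\ge c$ together with the $C^3$ bound. Second, run the high--low frequency decomposition of \cite{guth2020sharp} at scale $\de$: the low-frequency part of $\sum_\si|f_\si|^2$ is controlled by the local $L^4$ orthogonality argument, which uses only the transversality of the planks (hence the non-degeneracy), while the high-frequency part, after the rescaling described above, is estimated by the inductive hypothesis applied at a finer scale within the uniform class, and the resulting wave-envelope boxes are repackaged into the $s$-sum for $\Ga$ at scale $\de$ via the tiling relation. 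Third, combine the two parts, sum the geometric series in $\de$, and absorb all logarithmic and slowly growing losses into $\de^{-\e}$. Alternatively --- and this is the route taken in the appendix of \cite{ampdep} --- one observes that the proof of \cite{guth2020sharp} uses the precise shape of $\Gn$ only through its exact self-similarity under rescaling, which the uniform-class formulation above replaces; with that single modification the argument goes through for $\Ga$.

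The main obstacle is exactly this rescaling bookkeeping for a non-self-similar cone: one must track that the quantitative non-degeneracy constant of the rescaled arc does not degrade along the induction (it does not, after normalizing the arc to unit length), and that the linear map implementing the rescaling sends the boxes $U_{\tau_s}$ attached to $\ga$ to the boxes attached to the rescaled curve up to bounded eccentricity distortion, so that the output of the inductive step lands precisely in the right terms of the $s$-sum. A secondary, purely notational, complication is that the wave envelope estimate is not homogeneous under dilation --- the sum ranges over $\de^{1/2}\le s\le 1$ --- so in the high-frequency case the boxes generated at the finer scale have to be re-indexed into the $s$-sum at scale $\de$, which is immediate once the tiling relation is in hand. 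Beyond these points the argument is a faithful repetition of \cite{guth2020sharp}, and I would keep the exposition short, deferring the complete details to the appendix of \cite{ampdep}.
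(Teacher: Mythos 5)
Your proposal is correct and matches the paper's treatment: the paper gives no independent argument for Theorem \ref{GWZ}, simply invoking the appendix of \cite{ampdep}, which adapts the Guth--Wang--Zhang induction to a general non-degenerate cone in exactly the way you describe (running the induction uniformly over a quantitative class of curves with $|\det(\ga,\ga',\ga'')|\ge c$ and bounded $C^3$ norm so that rescaled sub-arcs stay in the class). Since you likewise defer the full rescaling bookkeeping to that appendix, your route is essentially the same as the paper's.
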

We are ready to prove Theorem \ref{fractaldec}.

\begin{proof}[Proof of Theorem \ref{fractaldec}]
By pigeonholing, we can assume all the wave packet of $g_\ga$ have amplitude $\sim 1$, so we have
\begin{equation}\label{L4L2}
    \int |g_\ga|^4\sim\int |g_\ga|^2.
\end{equation}
Apply Theorem \ref{GWZ} to $g$, we have
$$ \|g\|_{4}^4\le C_\e \de^{-\e} \sum_{\de^{1/2}\le s\le 1}\sum_{\tau_s\in\Tau_s}\sum_{U\parallel U_{\tau_s}} |U|^{-1}\|S_Ug\|_2^4. $$
For fixed $s, \tau_s, U\parallel U_{\tau_s}$, let us analyze the quantity $\|S_Ug\|_2^2$ on the right hand side.
By definition,
$$\|S_Ug\|_2^2=\int_U \sum_{\si\subset\tau_s}|g_\si|^2. $$
Note that $U$ has dimensions $\de^{-1}\times \de^{-1}s\times \de^{-1}s^2$, so its dual $U^*$ has dimensions $\de\times \de s^{-1}\times \de s^{-2}$. We will apply local orthogonality to each $f_\si$ on $U$. Let $\{\beta\}$ be a set of $(\de s^{-1})^2\times \de s^{-1}\times 1$-planks that form a finitely overlapping covering of $N_{\de s^{-1}}(\Ga)$. We see that $U^*$ and each $\beta$ have the same angular size $\de s^{-1}$.
For reader's convenience, we recall that we have defined three families of planks: $\{\ga:\ga\in\Ga_g\}$ of dimensions $\de\times \de\times 1$; $\{\beta\}$ of dimensions $(\de s^{-1})^2\times \de s^{-1}\times 1$; $\{\si\}$ of dimensions $\de\times \de^{1/2}\times 1$.

Since $\de^{1/2}\le s\le 1$, we have the nested property for these planks: each $\ga(\in \Ga_g)$ is contained in $100$-dilation of some $\beta$ and each $\beta$ is contained in $100$-dilation of some $\si$. We simply denote this relationship by $\ga\subset \beta, \beta\subset \si$. We can write 
$$ g_\si=\sum_{\beta\subset \si} g_\beta,\ \ \ g_\beta=\sum_{\ga\subset \beta} g_\ga. $$

Choose a smooth bump function $\psi_U$ at $U$ satisfying: $|\psi_U|\gtrsim \Id_U$, $\psi_U$ decays rapidly outside $U$, and $\wh \psi_U$ is supported in $U^*$. 
We have
$$ \int_U |g_\si|^2\lesssim \int |\psi_U \sum_{\beta\subset \si}g_\beta|^2.  $$
Since $(\psi_U g_\beta)^\wedge\subset U^*+\beta$ and by a geometric observation that $\{U^*+\beta\}_{\beta\subset \si}$ are finitely overlapping, we have
$$ \int_U|g_\si|^2\lesssim \int \sum_{\beta\subset\si}|\psi_U g_\beta|^2=\int \sum_{\beta\subset\si}|\psi_U \sum_{\ga\subset \beta}g_\ga|^2\lesssim \int \sum_{\beta\subset\si}\#\{\ga\subset \beta\}\sum_{\ga\subset \beta}|\psi_U g_\ga|^2. $$
Summing over $\si\subset \tau_s$, we get
\begin{align}
    \|S_U g \|_2^2=\int_U \sum_{\si\subset\tau_s}|g_\si|^2&\lesssim \int \sum_{\si\subset\tau_s}\sum_{\beta\subset\si}\#\{\ga\subset \beta\}\sum_{\ga\subset \beta}|\psi_U g_\ga|^2\\
    &\lesssim \big(\sum_{\si\subset\tau_s}\sum_{\beta\subset\si}\#\{\ga\subset \beta\}\big)(\sup_\beta\sum_{\ga\subset \beta}|g_\ga|^2)\int |\psi_U|^2\\
    (\textup{Since~}\|g_\ga\|_\infty\le 1)\ \ \ &\lesssim \#\{\ga\subset\tau_s\} \#\{\ga\subset\beta\}|U|\\
    (\textup{By~the~}t\textup{-spacing~condition})\ \ \ &\lesssim (s/\de)^t(\de s^{-1}/\de)^t |U|\\
    &=\de^{-t}|U|.
\end{align} 

Therefore, we have
\begin{equation}
    \sum_{U}|U|^{-1}\|S_U g\|_2^4\lesssim \de^{-t}\sum_U \|S_U g\|_2^2=\de^{-t}\int \sum_{\si\subset\tau_s}|g_\si|^2\lesssim\de^{-t}\int \sum_{\ga\subset\tau_s}|g_\ga|^2.
\end{equation}
The last inequality is by the $L^2$ orthogonality.

Noting \eqref{L4L2}, we have
\begin{equation}
    \|g\|_{4}^4\le C_\e \de^{-\e-t} \sum_{\de^{1/2}\le s\le 1}\sum_\ga \int|g_\ga|^2\sim C_\e \de^{-\e-t} \sum_{\de^{1/2}\le s\le 1}\sum_\ga \int|g_\ga|^4\lesssim C_\e \de^{-2\e-t}\sum_\ga \int|g_\ga|^4.
\end{equation}

\end{proof}

\bibliographystyle{abbrv}
\bibliography{bibli}

\end{document}